\newtheorem{claim}{Claim}
\newtheorem{theorem}{Theorem}[section]
\newtheorem{lemma}[theorem]{Lemma}
\newtheorem{proposition}[theorem]{Proposition}
\newtheorem{conjecture}[theorem]{Conjecture}
\newtheorem{question}[theorem]{Question}
\newcommand{\set}[1]{\ensuremath{\left\{#1 \right\}}}
\newenvironment{proofclaim}[1][]%
    {\noindent \emph{Proof.} {}{#1}{}}{$~$\hfill $~\blacklozenge$ \vspace{0.2cm}}
\definecolor{defblue}{rgb}{0.4,0,0.84}
\definecolor{greyblue}{rgb}{0.23,0.4,0.70}
\definecolor{orange}{rgb}{1.0,0.5,0.2}
\definecolor{violet}{rgb}{0.55,0,0.55}
\newcommand{\lir}{\ensuremath{\chi_{\mathrm{irr}}'}}
\g@addto@macro{\UrlBreaks}{\UrlOrds}
\newcolumntype{Y}{>{\centering\arraybackslash}X}
\begin{document}

\title{{\bf Locally irregular edge-coloring \\ of subcubic graphs}}

\author
{	
	Borut Lu\v{z}ar\thanks{Faculty of Information Studies in Novo mesto, Slovenia. 
		E-Mails: \texttt{borut.luzar@gmail.com, kennystorgel.research@gmail.com}}, \quad
	M\'{a}ria Macekov\'{a}\thanks{Faculty of Science, Pavol Jozef \v Saf\'{a}rik University, Ko\v{s}ice, Slovakia, \newline 
		E-Mails: \texttt{\{maria.macekova,roman.sotak\}@upjs.sk}, \texttt{simona.rindosova@student.upjs.sk}, \texttt{k.cekanova@gmail.com}}, \quad
	Simona Rindo\v sov\'{a}\footnotemark[2], \\
	Roman Sot\'{a}k\footnotemark[2], \quad
	Katar\'{i}na Srokov\'{a}\footnotemark[2], \quad
	Kenny \v{S}torgel\footnotemark[1]
}

\maketitle

{
\begin{abstract}
	A graph is {\em locally irregular} if no two adjacent vertices have the same degree.
	A {\em locally irregular edge-coloring} of a graph $G$ is such an (improper) edge-coloring 
	that the edges of any fixed color induce a locally irregular graph. 
	Among the graphs admitting a locally irregular edge-coloring, i.e., {\em decomposable graphs},
	only one is known to require $4$ colors, while for all the others it is believed that $3$ colors suffice.
	In this paper, we prove that decomposable claw-free graphs with maximum degree $3$,
	all cycle permutation graphs, and all generalized Petersen graphs admit a locally irregular edge-coloring with at most $3$ colors.
	We also discuss when $2$ colors suffice for a locally irregular edge-coloring of cubic graphs 
	and present an infinite family of cubic graphs of girth $4$ which require $3$ colors.
\end{abstract}
}

\medskip
{\noindent\small \textbf{Keywords:} locally irregular edge-coloring, locally irregular graph, subcubic graph, generalized Petersen graph, cycle permutation graph}

\section{Introduction}

A graph is {\em locally irregular} if no two adjacent vertices have the same degree.
A {\em locally irregular edge-coloring} (or a {\em LIEC} for short) of a graph $G$ is such an edge-coloring 
that the edges of any fixed color induce a locally irregular graph. 
Since $K_2$ is not a locally irregular graph, every LIEC is necessarily improper.
Hence, there are graphs (e.g., odd paths and odd cycles) that do not admit a LIEC.
If a graph does admit a LIEC, we call it {\em decomposable}.
The smallest number of colors such that a decomposable graph $G$ admits 
a LIEC is called the \textit{locally irregular chromatic index} and denoted by $\lir(G)$.

The locally irregular edge-coloring was introduced in 2015 by Baudon et al.~\cite{BauBenPrzWoz15} 
who were motivated by the well-known (1-2-3)-Conjecture proposed in~\cite{KarLucTho04}.
Since then, a number of results were published.
The first to establish a constant upper bound of $328$ for the locally irregular chromatic index of decomposable graphs
were Bensmail, Merker, and Thomassen~\cite{BenMerTho16}. 
Their bound was further improved to $220$ by Lu\v{z}ar, Przyby{\l}o, and Sot\'{a}k~\cite{LuzPrzSot18},
who combined the decomposition presented in~\cite{BenMerTho16} 
with a result on decomposition of bipartite graphs to odd subgraphs presented in~\cite{LuzPetSkr15}.

It turned out that only a few colors usually suffice for a LIEC of a decomposable graph,
and so Baudon et al.~\cite{BauBenPrzWoz15} conjectured that 
every decomposable graph admits a LIEC using at most $3$ colors. 
Their conjecture was just recently rejected by Sedlar and \v{S}krekovski~\cite{SedSkr21} 
who presented the graph $H_0$ with $\lir(H_0) = 4$ (see Figure~\ref{fig:counter}).
\begin{figure}[htp!]
	$$
		\includegraphics{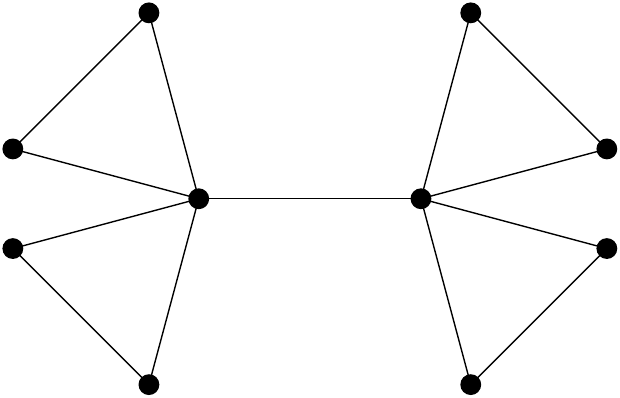}
	$$
	\caption{The graph $H_0$ with $\lir(H_0) = 4$.}
	\label{fig:counter}
\end{figure}

The graph $H_0$ is a cactus graph and after establishing that all decomposable cacti except $H_0$ admit a LIEC 
with at most $3$ colors~\cite{SedSkr22b} (see also~\cite{SedSkr22a}), 
Sedlar and \v{S}krekovski proposed a revised conjecture.
\begin{conjecture}[Sedlar and \v{S}krekovski~\cite{SedSkr22b}]
	\label{conj:main2}
	For every connected decomposable graph $G$, not isomorphic to $H_0$, it holds that $\lir(G) \le 3$.
\end{conjecture}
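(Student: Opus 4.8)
The plan is to attack \Cref{conj:main2} not in full generality --- which seems well beyond current techniques, the counterexample $H_0$ of \Cref{fig:counter} showing how fragile the bound $3$ already is --- but by isolating structurally restricted families where a locally irregular edge-coloring with three colors can be built essentially explicitly, and where one can hope to pin down $H_0$ as the unique obstruction. I would first restrict to graphs of maximum degree at most $3$: already there the behaviour of $\lir$ is subtle, yet the local picture is simple, since in any color class every vertex has degree $1$, $2$, or $3$, so the only things to avoid are monochromatic $K_2$'s and monochromatic paths/cycles of the ``wrong'' parity. The first concrete target is decomposable \emph{claw-free} subcubic graphs: a connected claw-free graph of maximum degree $3$ has a very restricted shape (it is a path, a cycle, or is assembled from a short list of small gadgets --- triangles, diamonds, and short connecting paths). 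The strategy is to $3$-color each gadget while prescribing colors on the few edges joining gadgets, then glue; throughout, one must track the parity of each monochromatic component across the gluing, because the parity obstructions (odd paths, odd cycles) are exactly what forces the occasional third color.

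Next I would turn to the genuinely cubic, highly symmetric families named in the abstract --- cycle permutation graphs (two vertex-disjoint $n$-cycles joined by a perfect matching given by a permutation) and generalized Petersen graphs (an outer $n$-cycle, an inner circulant splitting into $\gcd(n,k)$ cycles, and a perfect matching of spokes). Here the idea is an explicit periodic construction: fix a repeating coloring pattern along the cycle(s) together with a rule for coloring the rungs/spokes, chosen so that every color class becomes a disjoint union of locally irregular pieces --- principally paths of length $2$, possibly with some copies of $K_{1,3}$ --- and then check that no rung ever carries a monochromatic $K_2$. Because these graphs have a near-product structure, correctness reduces to verifying a bounded number of residue classes of $n$ (and of $k$, via parity and $\gcd$-type conditions) modulo a small period, plus finitely many sporadic small cases, the Petersen graph itself being the likely exceptional one to handle by hand.

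The final ingredient is the sharpness discussion for cubic graphs: to show the constant $3$ in these theorems cannot in general be lowered, I would give broad sufficient conditions for when two colors already suffice --- equivalently, when $E(G)$ decomposes into two locally irregular subgraphs --- and then exhibit an infinite family of cubic graphs of girth $4$ for which two colors provably fail. The construction glues many copies of a small girth-$4$ gadget whose every $2$-edge-coloring is forced to create either a monochromatic $K_2$ or a monochromatic odd component; a short counting/parity argument on the gadget then propagates to the whole family, while three colors succeed by the periodic construction above.

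The main obstacle I anticipate is the gluing step at full strength. Even with maximum degree $3$, once a block-decomposition is combined with prescribed boundary colors, the parities one is forced into can clash at a cut vertex or a cut edge, and resolving every such clash with only three colors requires careful bookkeeping of \textbf{which color plays which role locally}. This is precisely where the $H_0$ exception originates, so the argument must be delicate enough to reproduce $H_0$ as the sole obstruction while still going through for everything else; making the case analysis simultaneously exhaustive and clean is the crux.
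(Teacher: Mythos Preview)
First, note that the statement is a \emph{conjecture}: the paper does not prove it in full, and neither does your plan. Both the paper and you aim only at the same restricted families (claw-free subcubic, cycle permutation, generalized Petersen), so the real comparison is between your outline for those and the paper's actual arguments.

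For claw-free subcubic graphs your gadget-and-glue description is vague but not obviously wrong. The paper, however, takes a different and cleaner route: it argues by minimal counterexample, ruling out a cascade of configurations (pendant $2$-paths, $2^+$-threads, non-trivial bridges, adjacent triangles, triangles touching $2$-vertices, adjacent $3$-vertices in distinct triangles). What survives has every $3$-vertex in exactly one triangle and every $2$-vertex sitting between two such triangles; contracting each triangle yields a bipartite graph with sides of degree $3$ and $2$, and a K\"onig $3$-edge-coloring there lifts back to a $3$-LIEC of $G$. No parity bookkeeping across glued gadgets is needed.

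There is a genuine gap in your plan for cycle permutation graphs. You propose a \emph{periodic} pattern verified on residue classes of $n$ (and $k$), but a cycle permutation graph $P_\phi(C_n,C_n)$ is built from an \emph{arbitrary} permutation $\phi$: there is no period, no residue-class reduction, and no ``near-product structure'' to exploit. Your scheme could at best cover generalized Petersen graphs, and even there the aside about the Petersen graph being exceptional is off --- $P(5,2)$ needs no special handling. The paper's method is entirely different and works uniformly for any $\phi$ (indeed for any $2$-regular inner graph): it first builds a spanning locally irregular subgraph $S'$ from the inner cycle(s), alternate spokes, and one well-chosen outer edge, so $\lir(S')=1$; the complement $T'=G\setminus E(S')$ is then a \emph{tree}, and three short lemmas (a leaf at a $3$-vertex, an odd pendant path, or an odd thread between $3$-vertices each force $\lir(T')\le 2$) together with a parity count on $|E(T')\cap E(C_n)|$ finish the proof. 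This $1+2$ decomposition is the key idea your proposal is missing.

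For the girth-$4$ family with $\lir=3$, your ``glue copies of a gadget and run a parity argument'' is in the right spirit; the paper's construction ($\mathrm{XI}_n$) does exactly this, but the non-$2$-colorability is proved not by a direct parity count but by encoding every $2$-LIEC of the gadget as a boundary ``code'', building a finite digraph on the admissible codes, and observing that all its strong components are bipartite --- hence no closed walk of odd length, hence no $2$-LIEC of $\mathrm{XI}_{2k+1}$.
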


Regarding decomposable graphs admitting locally irregular edge-colorings using at most $3$ colors,
it was shown that, e.g., $k$-regular graphs for $k \ge 10^7$~\cite{BauBenPrzWoz15},
graphs with minimum degree at least $10^{10}$~\cite{Prz16},
trees~\cite{BauBenSop15}, and unicyclic graphs~\cite{SedSkr21} are such.

In this paper, we focus on graphs with maximum degree $3$.
In~\cite{LuzPrzSot18}, the authors proved the following.
\begin{theorem}[Lu\v{z}ar, Przyby{\l}o, and Sot\'{a}k~\cite{LuzPrzSot18}]
	\label{thm:subcub}
	For every decomposable graph $G$ with maximum degree $3$ it holds that $\lir(G) \le 4$.
\end{theorem}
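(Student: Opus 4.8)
\noindent\textit{Proof idea.} The plan is to reduce first to the connected case and then to split the edge set into two linear forests. If $G_1,\dots,G_k$ are the components of $G$, then $G$ is decomposable exactly when every $G_i$ is, and a locally irregular edge-coloring of $G$ is obtained by colouring the $G_i$'s independently from a common palette of at most four colours; so it suffices to prove $\lir(H)\le 4$ for every connected decomposable $H$ with $\Delta(H)\le 3$. If $\Delta(H)\le 2$ then $H$ is a path or a cycle, and among these the decomposable ones are precisely the paths with an even number of edges and the cycles of even length; for these $\lir(H)\le 3$ follows by cutting $H$ into two-edge pieces (copies of $P_3$), using that a disjoint union of copies of $P_3$ is locally irregular, and then distributing the pieces among the colours, where two colours suffice unless $H$ is a cycle of length congruent to $2$ modulo $4$ (which provably needs a third). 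From now on $H$ is connected and decomposable with $\Delta(H)=3$, and, by the characterisation of non-decomposable connected graphs due to Baudon, Bensmail, Przyby{\l}o, and Wo\'zniak~\cite{BauBenPrzWoz15}, $H$ is neither an odd path nor an odd cycle nor a member of the exceptional family $\mathcal{T}$.

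The main step is to write $E(H)=L_1\cup L_2$ where each $G[L_i]$ is a disjoint union of paths; this is possible since graphs with maximum degree $3$ have linear arboricity at most $2$. A linear forest all of whose path components have an even number of edges (that is, copies of $P_3,P_5,P_7,\dots$, possibly together with isolated vertices) is decomposable with locally irregular chromatic index at most $2$: pair consecutive edges of each path into copies of $P_3$ and colour these copies by two alternating colours, so that same-coloured copies are vertex-disjoint. It therefore remains to modify the pair $(L_1,L_2)$ until neither $L_i$ has a \emph{bad} component, by which I mean a single edge or a path with an odd positive number of edges ($K_2,P_4,P_6,\dots$), each of which is itself non-decomposable. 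I would do this by repeatedly shifting a single edge from one forest to the other across a vertex belonging to both: a bad component $Q$ of $L_i$ cannot be all of $H$ (otherwise $H$ is $K_2$ or an odd path, both excluded), so by connectivity some vertex of $Q$ meets $L_{3-i}$, and a careful choice of which edge to move --- possibly an edge of $Q$ itself, so as to keep both parts acyclic --- either lengthens $Q$ to an even-edge path, or cuts a copy of $P_3$ off it, or merges it with an adjacent component, in each case decreasing a suitable potential measured on the bad components. Iterating produces a decomposition into two linear forests each of whose components is an even-edge path, and colouring each with at most two colours gives $\lir(H)\le 4$.

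The step I expect to be the main obstacle is proving that the edge-shifting process always makes progress, and in particular pinning down the finitely many small graphs where it stalls; these are the triangle-dense configurations. Any triangle of $H$ is split by a linear-forest decomposition into a copy of $P_3$ in one forest and a single, possibly isolated, edge in the other, so triangles that share vertices or are joined by short paths keep spawning bad components whose repair is hemmed in by the requirement that both parts stay acyclic --- already $K_4$ shows that no two-part decomposition of this type can do better than three colours there. I would dispose of the resulting base cases --- $K_4$, the triangular prism $K_3\cartp K_2$, triangles carrying pendant paths, and the handful of further small graphs on which the shifting stalls --- by an explicit finite case analysis, while checking that the reduction never lands on a non-decomposable subcubic graph (an odd path, an odd cycle, or a subcubic member of $\mathcal{T}$); then reinstating pendant edges and chains of degree-$2$ vertices is a routine local extension, tied together by a short induction on the number of degree-$3$ vertices. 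Throughout, it is the availability of the fourth colour that supplies exactly the slack needed to absorb these triangle configurations, which is why the bound obtained this way is $4$ rather than $3$.
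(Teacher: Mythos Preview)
First, note that the paper does \emph{not} prove this theorem: it is quoted from~\cite{LuzPrzSot18} as background, so there is no proof here to compare against. I can only assess your proposal on its own merits.

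Your reduction to the connected case and the handling of $\Delta\le 2$ are fine, and the idea of splitting $E(H)$ into two linear forests $L_1,L_2$ and $2$-LIEC-colouring each is sound \emph{provided} every path component of each $L_i$ has an even number of edges. The genuine gap is your claim that the edge-shifting process fails only on finitely many small graphs. This is false for a simple parity reason: if every component of $L_1$ is an even-edge path then $|L_1|$ is even, and likewise $|L_2|$, so $|E(H)|=|L_1|+|L_2|$ must be even. But a cubic graph on $n$ vertices has $3n/2$ edges, which is odd whenever $n\equiv 2\pmod 4$. Thus for \emph{every} cubic graph on $4k+2$ vertices (the Petersen graph, the prism $K_3\cartp K_2$, $GP(7,2)$, $\mathrm{XI}_{2k+1}$, and infinitely many others, all decomposable) no decomposition into two all-even linear forests exists at all. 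These are not sporadic base cases that a finite check can absorb; they form an infinite family on which your scheme simply does not start.

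Nor can this be patched by allowing a single odd-edge path in one forest: an odd path is itself non-decomposable (as you note for $K_2$ and $P_4$), so that forest would not admit a $2$-LIEC and you would be back to needing a third colour on it, i.e.\ five colours in total. The actual argument in~\cite{LuzPrzSot18} does not go through linear forests of even paths; you will need a different decomposition principle---one that does not impose a global parity constraint on $|E(H)|$---to make a proof along these lines work.
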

It remained an open question whether $3$ colors are always sufficient. 
We partially answer this by proving that the answer is affirmative for
decomposable claw-free graphs with maximum degree $3$ in Section~\ref{sec:claw}, 
and for cycle permutation graphs and generalized Petersen graphs in Section~\ref{sec:Petersen}.

We additionally study cubic graphs that do not admit a LIEC using at most $2$ colors.
It is already known that every cubic bipartite graph $G$ has $\lir(G) \le 2$~\cite{BauBenPrzWoz15}.
This motivated us to investigate properties of (cubic) graphs which do not admit a LIEC with at most $2$ colors.
We present the results in Section~\ref{sec:2col}.

\section{Preliminaries}
\label{sec:prel}

In this section, we present terminology, notation, and auxiliary results that we are using in our proofs.

First, by a {\em $k$-LIEC} we refer to a locally irregular edge-coloring with at most $k$ colors.
For a $k$-LIEC $\sigma$ of $G$, we denote by $d_\sigma^i(u)$ the number of edges incident with a vertex $u$ and colored with $i$;
if the coloring $\sigma$ is clear from the context, we only write $d^i(u)$.
If two vertices are incident with the same number of edges of some color, we say that {\em they have the same color degree}.

A {\em $k$-vertex} (a {\em $k^+$-vertex}) is a vertex of degree $k$ (at least $k$).
A $k$-path $P=v_1,\dots, v_{k+1}$ is {\em pendant} in a graph $G$ if $d_G(v_1) = 1$, $d_G(v_{k+1}) \ge 2$,
and $d_G(v_i) = 2$ for $i=2,\dots,k$.
By a {\em $k$-thread} (a {\em $k^+$-thread}) in a graph $G$ we refer to a subgraph $T$ of $G$ isomorphic to the $k$-path (a $k^+$-path),
in which all vertices have degree $2$ also in $G$.

As described in~\cite{BauBenPrzWoz15}, decomposable graphs have a very specific structure.
In particular, the graphs which are not decomposable are odd paths, odd cycles, 
and graphs from the family $\mathcal{T}$ defined recursively as follows:
\begin{itemize}
	\item{} the triangle is in $\mathcal{T}$;
	\item{} every other graph in $\mathcal{T}$ is constructed by taking an 
			auxiliary graph $F$ which might either be an even path or an odd path with a triangle glued to one end, 
			then choosing a graph $G \in \mathcal{T}$ containing a triangle with at least one vertex $v$ of degree $2$, 
			and finally identifying $v$ with a vertex of degree $1$ in $F$.
\end{itemize}
Note that all graphs in $\mathcal{T}$ have maximum degree $3$ and so any graph $G$ with $\Delta(G) \ge 4$ is decomposable.

\section{Claw-free graphs}
\label{sec:claw}

In this section, we consider claw-free graphs with maximum degree $3$ 
and show that if such a graph is decomposable, then it admits a $3$-LIEC.

\begin{theorem}
	\label{thm:claw}
	For every decomposable claw-free graph $G$ with maximum degree $3$ it holds that
	$$
		\lir(G) \le 3\,.
	$$
\end{theorem}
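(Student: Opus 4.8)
The plan is to exploit the structure of connected claw-free graphs with maximum degree $3$. Such graphs are very restricted: around any vertex $v$ of degree $3$, the three neighbors cannot form an independent set, so at least one edge is present among them. A convenient way to organize the argument is to recall the classical description of connected claw-free subcubic graphs: apart from small sporadic cases, each such graph is obtained from a cycle, a path, or the prism/$K_4$-type gadgets by attaching triangles and replacing edges by threads. More concretely, I would first handle the easy structural reductions: if $\Delta(G) \le 2$, then $G$ is a path or a cycle, and being decomposable it is an even path or an even cycle, each of which trivially has $\lir(G) \le 2$ (color a perfect matching on the even cycle with one color and the rest with another, or split an even path similarly). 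So from now on I may assume $G$ contains a $3$-vertex.

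Next I would set up an induction on the number of vertices (or edges), with the following reduction moves, each of which either removes a small decomposable piece and re-colors, or contracts a thread. \textbf{(i) Pendant triangles and pendant paths.} If $G$ has a triangle $T$ with exactly one vertex $u$ of degree $3$, then $T$ minus $u$ forms a pendant structure; color the two triangle edges at $u$'s neighbors plus possibly a short pendant thread in a fresh-looking way using colors already available so that the triangle induces a locally irregular graph (a triangle with one subdivided edge is locally irregular), and apply induction to the rest. \textbf{(ii) Long threads.} If $G$ contains a $k$-thread with $k$ large, replace it by a shorter thread of the same parity, color the shortened graph by induction, and re-expand, distributing colors along the thread so local irregularity is preserved — here the key point is that a monochromatic path of length $\ge 2$ with distinct endpoints-degrees is fine, and one can always pad a thread by chunks of length $2$ in a single color. \textbf{(iii) The $2$-connected "core".} After removing pendant pieces and shortening threads, the remaining graph is essentially a subdivision of a $2$-connected claw-free cubic graph, which (apart from $K_4$ and a few small graphs) is a prism $C_n \cartp K_2$ with some edges subdivided, or a "ring of triangles." For these explicit families I would give a direct periodic $3$-coloring: on the triangular prism, for instance, two colors already suffice, and for longer prisms one builds a repeating pattern of period $2$ or $3$ along the two rim cycles, checking local irregularity at the (few) vertex types.

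The main obstacle I expect is \textbf{making the thread-shortening and triangle-removal reductions interact cleanly with the boundary conditions}: when I contract a thread or delete a pendant triangle, the degrees and color-degrees of the attachment vertices must be controlled so that the inductive coloring can be extended without creating a monochromatic $K_2$ or two adjacent vertices of equal color-degree. This is the usual delicate bookkeeping in locally irregular colorings — one typically needs a handful of "safe" local patterns (e.g. a vertex seeing colors with degree pattern $(1,2,0)$ versus $(2,1,0)$) and must verify that every reduction leaves enough freedom to realize one of them. A secondary obstacle is enumerating the genuinely sporadic small claw-free subcubic graphs ($K_4$, the prism, the "bull," "net," and a few others) and checking each by hand that $\lir \le 3$ (indeed most have $\lir \le 2$); this is finite but must be done carefully to be sure no exceptional graph forces a $4$th color. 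Once the case analysis of the $2$-connected cores and the sporadic graphs is settled, the induction closes and the bound $\lir(G) \le 3$ follows.
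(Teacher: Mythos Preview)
Your plan has a genuine structural gap at step (iii). The claim that, after removing pendant pieces and shortening threads, the $2$-connected core of a claw-free subcubic graph is ``essentially a subdivision of a prism $C_n\cartp K_2$ or a ring of triangles'' is not correct. What actually remains (and what the paper establishes through a sequence of reducible-configuration claims) is a graph in which every $2$-vertex has two $3$-neighbours, every pair of adjacent $3$-vertices lies in a common triangle, and triangles are pairwise vertex-disjoint. Contracting each triangle to a single vertex then yields an \emph{arbitrary} bipartite graph $H$ with one side $3$-regular and the other side $2$-regular. Such an $H$ need not be a subdivided cycle; it can be any $(3,2)$-biregular bipartite graph, so your ``periodic colouring along two rim cycles'' strategy does not cover the general case.

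The paper's proof avoids this by not attempting an explicit colouring of the core at all: it applies K\"onig's theorem to $H$ to obtain a proper $3$-edge-colouring, pulls this back to the non-triangle edges of $G$, and then observes that each triangle can be completed (its three outgoing edges already carry three distinct colours, so the triangle edges can be assigned to make each triangle vertex have colour-degree pattern $(2,1,0)$). This K\"onig step is the missing idea in your outline. Your reductions (pendant paths, threads, pendant triangles) are in the right spirit and do correspond to the paper's Claims, but you explicitly flag the boundary-condition bookkeeping as an unresolved obstacle, and the proposal as written does not resolve it; the paper handles each reduction with an explicit recolouring case analysis that you would still need to supply.
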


\begin{proof}	
	We prove the theorem by contradiction.
	Let $G$ be a minimal counterexample to the theorem in terms of the number of edges;
	i.e.,
	$G$ is a connected decomposable subcubic claw-free graph with the minimum number of edges such that $\lir(G) > 3$.
	We first establish several structural properties of $G$.
	
	\begin{claim}
		\label{cl:pendant2path}
		$G$ does not contain a pendant $2$-path, i.e., in $G$, there is no edge $uv$ with $d(u) = 1$ and $d(v) = 2$.
	\end{claim}

	\begin{proofclaim}
		Since $G$ is decomposable, by the definition of $\mathcal{T}$, also the graph $G' = G \setminus \set{u,v}$ is decomposable.
		Thus, by the minimality, $G'$ admits a $3$-LIEC $\sigma'$ which induces a partial $3$-LIEC $\sigma$ of $G$
		with only the two edges incident with $v$ being non-colored.
		Let $w$ be the neighbor of $v$ distinct from $u$.
		Since $w$ is incident with at most two colored edges, there is a color $\alpha\in\{1,2,3\}$ such that $d^\alpha(w) = 0$,
		and so we can set $\sigma(vw) = \sigma(uv) = \alpha$, hence extending $\sigma$ to all edges of $G$, a contradiction.
	\end{proofclaim}

	\begin{claim}
		\label{cl:2thread3cyc}
		$G$ does not contain any $3$-cycle incident with two $2$-vertices.
	\end{claim}

	\begin{proofclaim}
		Suppose the contrary and let $C = v_1v_2v_3$ be a $3$-cycle with $d(v_1)=d(v_2)=2$.
		Clearly, $d(v_3)=3$; let $u$ be the neighbor of $v_3$, distinct from $v_1$ and $v_2$.
		By the construction of $\mathcal{T}$, 
		the graph $G' = G \setminus \set{v_1,v_2,v_3}$ is decomposable 
		and it admits a $3$-LIEC, which induces a partial $3$-LIEC $\sigma$ of $G$,
		where the edges $v_1v_2$, $v_2v_3$, $v_1v_3$, and $uv_3$ are non-colored.
		We may assume that $d^1(u)=0$.
		Then, we complete the coloring $\sigma$ by setting $\sigma(uv_3)=\sigma(v_1v_3)=1$ and $\sigma(v_1v_2)=\sigma(v_2v_3)=2$, a contradiction.
	\end{proofclaim}

	\begin{claim}
		\label{cl:2thread4cyc}
		$G$ does not contain any $4$-cycle incident with two consecutive $2$-vertices.
	\end{claim}

	\begin{proofclaim}
		Suppose the contrary and let $C = v_1v_2v_3v_4$ be a $4$-cycle with
		$d(v_1)=d(v_2)=2$.
		Since $\Delta(G)=3$, we also assume that $d(v_3)=3$, 
		and since $G$ is claw-free, $d(v_4) = 3$ and there is a vertex $u$
		adjacent to $v_3$ and $v_4$.
		
		Suppose that $G' = G \setminus \set{v_2,v_3}$ is decomposable.
		Then, $G'$ admits a $3$-LIEC which induces a partial $3$-LIEC $\sigma$ of $G$ 
		where only the edges $v_1v_2$, $v_2v_3$, $v_3v_4$, and $uv_3$ are non-colored.
		We may assume that $\sigma(v_1v_4)=\sigma(uv_4) = 1$ and $d^2(u)=0$.
		Hence, we can complete the coloring by coloring the edges 
		$v_1v_2$, $v_2v_3$, $v_3v_4$, and $uv_3$ with color $2$, a contradiction.
		
		If $G'$ is not decomposable, 
		then $G'' = G \setminus \set{v_1,v_2,v_3}$ is decomposable
		and it admits a $3$-LIEC which induces a partial $3$-LIEC $\sigma$ of $G$ 
		where the edges $v_1v_2$, $v_1v_4$, $v_2v_3$, $v_3v_4$, and $uv_3$ are non-colored.
		Now, we may assume that $\sigma(uv_4) = 1$ and $d^2(u)=0$.
		We complete the coloring by coloring the edges 
		$v_2v_3$, $v_3v_4$, and $uv_3$ with color $2$,
		and the edges $v_1v_2$, $v_1v_4$ with $3$, a contradiction. 
	\end{proofclaim}

	\begin{claim}
		\label{cl:2thread}
		$G$ does not contain any $2^+$-thread.
	\end{claim}
	
	\begin{proofclaim}
		Suppose the contrary and let $u$ and $v$ be adjacent $2$-vertices in $G$.
		In particular, without loss of generality, 
		we choose the pair in such a way that the second neighbor of $u$ is a $3$-vertex $w$.
		We label the vertices as depicted in Figure~\ref{fig:2thread}.
		By Claim~\ref{cl:2thread3cyc}, $w \ne z$, and
		since $G$ is claw-free, the two neighbors $w_1$ and $w_2$ of $w$, distinct from $u$, are adjacent.		
		We additionally assume that $d(w_1) \le d(w_2)$.
		\begin{figure}[htp!]
			$$
				\includegraphics{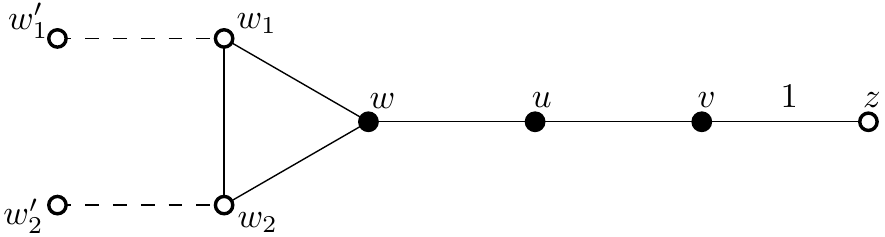}
			$$
			\caption{A $2^+$-thread in $G$ is reducible. 
				We depict vertices of nonspecified degrees as empty circles 
				and possibly nonexisting edges dashed, 
				where the vertices incident only to dashed edges may also not exist. 
				Note also that $w_1' = w_2'$ is possible.}
			\label{fig:2thread}
		\end{figure}
		
		Let $G'$ be the graph obtained from $G$ by contracting the edges $uv$ and $uw$.
		By Claim~\ref{cl:2thread4cyc}, $z$ is not adjacent to $w$, and consequently, $G'$ is a simple graph.
		Note that $\Delta(G') = 3$ and by construction of $\mathcal{T}$, $G'$ is also decomposable.		
		Moreover, by the minimality, $G'$ admits a $3$-LIEC $\sigma'$ which induces a partial $3$-LIEC $\sigma$ of $G$ 
		where only the edges $uv$ and $uw$ are non-colored.
		Without loss of generality, we may assume that $\sigma(vz)=1$.
		We show that $\sigma$ always extends to a $3$-LIEC of $G$ 
		by considering the cases regarding the colors of $ww_1$ and $ww_2$.
		
		\medskip
		\textbf{Case 1.}
		{\em The color of $vz$ does not appear at $w$, i.e., $d^{1}(w)=0$.} \quad
		Note that this case implies that at least one of $w_1$ and $w_2$ is a $3$-vertex, so $d(w_2) = 3$.
		If $\sigma(ww_1) = \sigma(ww_2)$, then we color the edges $uw$ and $uv$ with the color
		distinct from $\sigma(ww_1)$ and $\sigma(vz)$, and we are done.
		Thus, we may assume, without loss of generality, 
		that $\sigma(ww_1)=2$ and $\sigma(ww_2)=3$.
		Moreover, by the assumption that $d(w_2) = 3$ and the symmetry, we may also assume that $\sigma(w_1w_2) = 2$ 
		and thus $d^{3}(w_2)=2$.
		
		Now, we consider three subcases regarding the colors incident with $w_1$.
		First, if either $d(w_1) = 2$ or $d^2(w_1) = 2$ and $d^3(w_1) = 1$, 
		then we recolor $ww_1$ and $w_1w_2$ with $1$, and color $uw$ and $uv$ with $2$.
		Second, if $d^2(w_1) = 2$ and $d^1(w_1) = 1$, then there are two possibilities. 		
		If $d^1(w_1')=3$, then we set $\sigma(w_1w_2)=1$ and $\sigma(ww_1)=\sigma(uw)=\sigma(uv)=3$.
		If $d^1(w_1')=2$, then we proceed as in the first subcase.
		Third, if $d^2(w_1) = 3$, then we again consider the colors incident with $w_1'$. 
		If $d^2(w_1') = 1$, then we set $\sigma(ww_1)=\sigma(uw)=\sigma(uv)=3$.
		Otherwise, $d^2(w_1') = 2$ and we again color the edges as in the first subcase.
		
		\medskip
		\textbf{Case 2.}
		{\em $d^{1}(w)=1$.} \quad
		In this case, we set $\sigma(uv)=1$ and consider four subcases.
		
		\medskip
		\textbf{Case 2.1.}		
		{\em Suppose that $d(w_1) = 2$ and $\sigma(ww_1) = 1$.} \quad
		Then $d^1(w_1)=1$ since $\sigma$ is a LIEC of $G'$. We may therefore assume that $\sigma(ww_2)=\sigma(w_1w_2)=2$.
		Thus, setting $\sigma(ww_1) = \sigma(uw)=3$ extends $\sigma$ to all edges of $G$, a contradiction.
		
		\medskip
		\textbf{Case 2.2.}		
		{\em Suppose that $d(w_1) = 2$ and $\sigma(ww_2) = 1$.} \quad
		Then, we may assume that $\sigma(ww_1) = \sigma(w_1w_2)=2$ and $\sigma(w_2w_2')=3$ (in the case when $d(w_2)=2$, we proceed as in Case~2.1).
		Thus, we may set $\sigma(ww_2)=2$ and $\sigma(ww_1) = \sigma(uw)=3$, a contradiction.
		
		\medskip
		\textbf{Case 2.3.}		
		{\em Suppose that $d(w_1) = 3$ and $d^1(w_1)=3$.} \quad
		Note that, by the symmetry, this covers also the case with $d(w_1)=3$, $d^1(w_2)=3$.
		Next, we may assume that $\sigma(ww_2) = 2$.
		If $d^1(w_1') = 1$, then we set $\sigma(ww_1) = \sigma(uw) = 2$ (note that $\sigma(w_2w_2') = 2$, thus $d^1(w_2)\not = d^1(w_1)$).
		Otherwise, $d^1(w_1')=2$ and we set $\sigma(w_1w_2)=2$ and $\sigma(ww_1)=\sigma(ww_2)=\sigma(uw)=3$, a contradiction.
		
		\medskip
		\textbf{Case 2.4.}		
		{\em Suppose that $d(w_1) = 3$ and $d^1(w_1)=1$ with $\sigma(ww_1) = 1$.} \quad		
		Note that, by the symmetry, this covers also the case with $d(w_1)=3$, $d^1(w_2)=1$ and $\sigma(ww_2)=1$.
		Next, we may assume that $\sigma(ww_2) = 2$.		
		We consider three cases regarding the colors incident with $w_1$.
		First, if $d^2(w_1)=2$, then we set $\sigma(ww_1)=\sigma(uw)=3$.
		Second, if $d^3(w_1)=2$, then we set $\sigma(ww_1)=\sigma(uw)=2$.
		Third, suppose that $d^2(w_1)=d^3(w_1)=1$.
		Note that in this case $\sigma(w_1w_2)=2$ and $\sigma(w_1w_1')=3$, otherwise $\sigma$ is not a LIEC of $G'$.
		If $\sigma(w_2w_2')=1$ and $d^3(w_1') = 2$, then we set $\sigma(w_1w_2)=\sigma(ww_1)=3$ and $\sigma(uw)=2$.
		If $\sigma(w_2w_2')=1$ and $d^3(w_1') = 3$, then we set $\sigma(w_1w_2)=3$ and $\sigma(ww_1)=\sigma(uw)=2$.
		Next, if $\sigma(w_2w_2')=3$, then we set $\sigma(w_1w_2)=1$ and $\sigma(uw)=2$.
		Otherwise, $\sigma(w_2w_2')=2$.
		In that case, if $d^2(w_2')=2$, then we set $\sigma(w_1w_2)=\sigma(ww_2)=1$ and $\sigma(ww_1)=\sigma(uw)=2$.
		Therefore, $d^2(w_2') = 1$. 
		If $d^3(w_1') = 2$, then we set $\sigma(ww_1)=\sigma(w_1w_2)=\sigma(uw)=3$. 
		Otherwise $d^3(w_1') = 3$ and we set $\sigma(ww_1)=3$ and $\sigma(uw)=2$.
		Thus, $\sigma$ can be extended to all edges of $G$, a contradiction. 
		
		\medskip
		\textbf{Case 3.}
		{\em $d^{1}(w)=2$.} \quad
		By the symmetry, we may assume that $\sigma(w_1w_2) = \sigma(w_2w_2') = 2$.
		First, if $d^{1}(w_1)=2$ and $d^{1}(z)=1$, then we set $\sigma(ww_2)=\sigma(uw)=3$ and $\sigma(uv)=1$.
		Second, if $d^{1}(w_1)=1$ and $d^{1}(z)=2$, then we set $\sigma(uw) = \sigma(uv) = 2$.
		Third, if $d^{1}(w_1)=2$ and $d^{1}(z)=2$, then we can set $\sigma(w_1w_2) = 1$ and $\sigma(ww_2) = \sigma(ww_1) = \sigma(uw) = \sigma(uv) = 2$.
		Fourth, if $d^{1}(w_1)=1$ and $d^{1}(z)=1$, then we first set $\sigma(uv) = 1$. 
		Note that $\sigma(w_1w_1') = 3$ and we consider two cases with respect to $d^3(w_1')$.
		If $d^3(w_1') = 3$, then we set $\sigma(ww_1) = \sigma(ww_2) = \sigma(uw) = 3$.
		Otherwise, $d^3(w_1') = 2$ and we set $\sigma(ww_2) = 2$ and $\sigma(w_1w_2) = \sigma(ww_1) = \sigma(uw) = 3$.
		Thus, $\sigma$ can always be extended to all edges of $G$, a contradiction.
	\end{proofclaim}

	\begin{claim}
		\label{cl:bridge}
		$G$ does not have a non-trivial bridge.
	\end{claim}
	
	\begin{proofclaim}
		Suppose the contrary and let $uv$ be a bridge in $G$ with $d(u)>1$ and $d(v)>1$.
		The graph $G \setminus \set{uv}$ has exactly two connected components;
		let $G_u$ be the component containing $u$ and $G_v$ the component containing $v$.		
		By Claim~\ref{cl:2thread}, at least one of $u$ and $v$ is a $3$-vertex, say $d(u) = 3$.
		Moreover, Claim~\ref{cl:pendant2path} and the fact that $G$ is claw-free imply 
		that each of $G_u$ and $G_v$ is either isomorphic to a triangle (which is not possible due to Claim~\ref{cl:2thread3cyc}) or has maximum degree equal to $3$.

		Suppose first that $d(v)=2$ and let $w$ be the neighbor of $v$ distinct from $u$.
		Let $G_u' = G_u \cup \set{uv}$.
		By Claims~\ref{cl:2thread} and~\ref{cl:pendant2path}, $d(w)=3$.
		This means that $\Delta(G_u') = \Delta(G_v) = 3$. 
		By the construction of graphs in $\mathcal{T}$, neither $G_u'$ nor $G_v$ are in $\mathcal{T}$ 
		(no graph in $\mathcal{T}$ has a pendant edge with one endvertex of degree $3$) 
		and thus both are decomposable and claw-free.
		Therefore, by the minimality, $G_u'$ and $G_v$ admit $3$-LIEC $\sigma_u$ and $\sigma_v$, respectively.
		It is easy to see that a permutation of colors in $\sigma_v$ such that $\sigma_u(uv)\neq \sigma_v(vw)$ induces 
		a $3$-LIEC of $G$, a contradiction.
		
		So, we may assume that $d(v)=3$.
		Let $G_u' = G_u \cup \set{uv}$ and $G_v' = G_v \cup \set{uv}$.
		As deduced above, both $G_u'$ and $G_v'$ are decomposable (and claw-free).
		Therefore, if $G_u$ is decomposable, then there is a $3$-LIEC of $G$ induced by colorings of $G_u$ and $G_v'$,
		and similarly, if $G_v$ is decomposable, then there is a $3$-LIEC of $G$ induced by colorings of $G_u'$ and $G_v$.
		We may thus assume that neither $G_u$ nor $G_v$ are decomposable and consequently, both are in $\mathcal{T}$. 
		In that case, $G$ is also in $\mathcal{T}$ and thus non-decomposable, a contradiction.
	\end{proofclaim}

	\begin{claim}
		\label{cl:diamond}
		$G$ does not contain adjacent triangles.
	\end{claim}
	
	\begin{proofclaim}
		Suppose the contrary and let $uwz$ and $vwz$ be two adjacent triangles in $G$.
		If $d(u) = 2$ (and by the symmetry, if $d(v) = 2$), then, by Claim~\ref{cl:bridge}, 
		$G$ is a graph on at most $5$ vertices, which is trivially $2$-colorable.
		
		Thus, $d(u) = d(v) = 3$. 
		Let $u'$ and $v'$ be the neighbors of $u$ and $v$, respectively, both distinct from $w$ and $z$.
		If either $uu'$ or $vv'$ is a pendant edge, then, by Claim~\ref{cl:bridge}, 
		$G$ is a graph on at most $6$ vertices, which is $2$-colorable.
		Thus, we may assume that neither $uu'$ nor $vv'$ is a bridge.
		
		In the case when $u' = v'$, we have that $G$ is a $K_4$ with one edge subdivided (as $G$ is claw-free), which admits a $3$-LIEC.
		
		So we may assume that $u' \neq v'$.
		First observe that if $u'$ and $v'$ are adjacent, 
		then none of them is a $2$-vertex or incident with a pendant edge, 
		since $G$ would be a decomposable graph on at most $7$ vertices (in the case when $u'$ and $v'$ have a common neighbor) by Claim~\ref{cl:bridge}, 
		or not a claw-free graph.
		Therefore, $u'$ and $v'$ are not adjacent and none of them is adjacent to a $1$-vertex.
		Now, let $G'$ be the graph obtained from $G$ by removing the vertices $w$ and $z$, and identifying $u$ and $v$ into a single vertex $x$. 
		Note that $G'$ is claw-free and decomposable. Since $G$ is bridgeless, $G'$ is also bridgeless, which implies that there exists a path between $u'$ and $v'$ not containing the edges $uu'$ and $vv'$.
		Therefore, by the minimality, $G'$ admits a $3$-LIEC $\sigma'$ which induces a partial $3$-LIEC $\sigma$ of $G$,
		where $\sigma(uu')=\sigma'(xu')$ and $\sigma(vv')=\sigma'(xv')$.
		Without loss of generality, we consider two cases.
		First, suppose that $\sigma(uu') = \sigma(vv') = 1$.
		Then, we set $\sigma(uw) = \sigma(vw) = \sigma(wz) = 1$ and $\sigma(uz) = \sigma(vz) = 2$,
		hence extending $\sigma$ to $G$, a contradiction. 		
		Second, suppose that $\sigma(uu') = 1$ and $\sigma(vv') = 2$.
		If $d^1(u') = 3$, 
		then we set $\sigma(uw) = \sigma(vw) = \sigma(wz) = 1$ and $\sigma(uz) = \sigma(vz) = 3$.
		Thus, we may assume that $d^1(u') = 2$.
		Then, we set $\sigma(uw) = \sigma(uz) = \sigma(vw) = 1$ and $\sigma(wz) = \sigma(vz) = 3$.
		Thus, we can always extend $\sigma$ to all edges of $G$, a contradiction.
	\end{proofclaim}

	\begin{claim}
		\label{cl:widediamond}
		$G$ does not contain any $4$-cycle adjacent to two triangles.
	\end{claim}
	
	\begin{proofclaim}
		Suppose the contrary and let $uw_1z_1$ and $vw_2z_2$ be triangles in $G$ adjacent to the $4$-cycle $C = w_1w_2z_2z_1$.
		A similar reasoning as in Claim~\ref{cl:diamond} shows that either $G$ is a graph on at most $9$ vertices 
		admitting a $3$-LIEC or $d(u) = d(v) = 3$ with $uu', vv'\in E(G)$ 
		(where $u'$ and $v'$ are again the neighbors of $u$ and $v$, respectively, not incident with $C$), 
		$u' \neq v'$, and $u',v'$ are $2^+$-vertices.
		Moreover, by Claim~\ref{cl:bridge}, neither $uu'$ nor $vv'$ is a bridge, neither $u'$ nor $v'$ is adjacent to a $1$-vertex,
		and $u'$ is not adjacent to $v'$.
		
		Now, let $G'$ be the graph obtained from $G$ by removing the vertices of $C$, 
		and identifying $u$ and $v$ into a single vertex $x$. 
		By a similar reasoning as in Claim~\ref{cl:diamond} we have that $G'$ is claw-free and decomposable.
		Furthermore, by the minimality, $G'$ admits a $3$-LIEC $\sigma'$ which induces a partial $3$-LIEC $\sigma$ of $G$,
		where $\sigma(uu')=\sigma'(xu')$ and $\sigma(vv')=\sigma'(xv')$.
		Without loss of generality, we again consider two cases.
		First, suppose that $\sigma(uu') = \sigma(vv') = 1$.
		Then, we set $\sigma(uw_1) = \sigma(w_1w_2) = \sigma(w_1z_1) = \sigma(vz_2) = 1$, $\sigma(uz_1) = \sigma(z_1z_2) = 2$, 
		and $\sigma(vw_2) = \sigma(w_2z_2) = 3$, hence extending $\sigma$ to $G$, a contradiction.
		Second, suppose that $\sigma(uu') = 1$ and $\sigma(vv') = 2$.
		Then, we set $\sigma(uw_1) = \sigma(uz_1) = \sigma(vw_2) = \sigma(vz_2) = 3$, $\sigma(w_1w_2) = \sigma(w_1z_1) = 1$, 
		and $\sigma(w_2z_2) = \sigma(z_1z_2) = 2$.
		Again, we extended $\sigma$ to all edges of $G$, a contradiction.
	\end{proofclaim}

	\begin{claim}
		\label{cl:trian2vert}
		$G$ does not contain a triangle incident with a $2$-vertex or incident with a pendant edge.
	\end{claim}
	
	\begin{proofclaim}
		Suppose to the contrary that $uvw$ is a triangle in $G$ with 
		either $d(u)=2$ or $u$ having a neighbor $u'$ distinct from $v$ and $w$ such that $d(u')=1$.
		If also $d(v)=2$ or $v$ is adjacent to a pendant vertex, then the same holds for $w$ by Claim~\ref{cl:bridge}
		(but satisfying the requirement that $G$ is decomposable).
		In all these cases, it is easy to verify that $G$ is a graph on at most $6$ vertices which has a $3$-LIEC.

		Therefore, we may assume that $d(v)=d(w)=3$. 
		Let $v'$ and $w'$ be the third neighbors of $v$ and $w$, respectively.
		Since $G$ is bridgeless, so is $G' = G \setminus \set{u}$ (if $u'$ exists we remove it as well) and thus $G'$ is decomposable.
		Observe also that $G'$ is claw-free and therefore, by the minimality, $G'$ admits a $3$-LIEC $\sigma'$,
		which induces a partial $3$-LIEC $\sigma$ of $G$ with only the edges incident with $u$ being non-colored.
		Without loss of generality, we may assume that $\sigma(vv')=\sigma(vw)=1$ and $\sigma(ww')=2$.
		We complete the coloring of $G$ by coloring the edges (two or three if $u'$ exists) incident with $u$ by color $3$,
		a contradiction.		
	\end{proofclaim}		
	
	From Claims~\ref{cl:pendant2path} and~\ref{cl:trian2vert} we also infer that $G$ has minimum degree at least $2$.

	\begin{claim}
		\label{cl:triandist}
		Every pair of adjacent $3$-vertices in $G$ has a common neighbor,
		i.e., no two adjacent vertices are incident with distinct triangles. 
	\end{claim}

	\begin{proofclaim}
		Suppose the contrary and let $uvw$ and $xyz$ be triangles in $G$
		with $w$ and $z$ being adjacent. 
		We adopt the labeling of the vertices as depicted in Figure~\ref{fig:adj-trian}.
		Note that by Claims~\ref{cl:diamond} and~\ref{cl:trian2vert}, $u'$ and $v'$ exist and are distinct, 
		and similarly, $x'$ and $y'$ exist and are distinct.
		\begin{figure}[htp!]
			$$
				\includegraphics{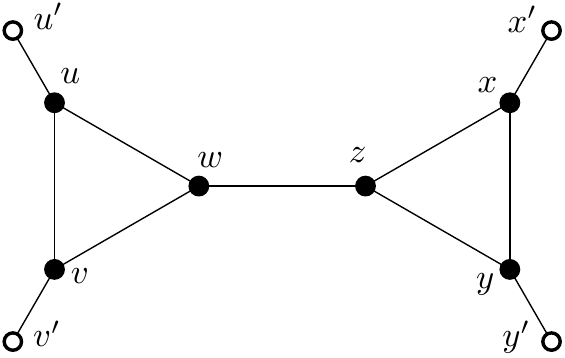}
			$$
			\caption{Two connected triangles do not appear in $G$.}
			\label{fig:adj-trian}
		\end{figure}		
		
		Now, consider the graph $G' = G \setminus \set{w}$.
		Since $G$ is bridgeless, $G'$ is connected.
		Moreover, there is a path $P$ between $u'$ and $v'$ containing neither $uu'$ nor $vv'$ in $G$ and thus $P$ is also contained in $G'$.
		This means that $G'$ contains a cycle of length more than $3$, and therefore $G'$ is decomposable.
		Thus, by the minimality, $G'$ admits a $3$-LIEC $\sigma'$,
		which induces a partial $3$-LIEC $\sigma$ of $G$ with only the edges incident with $w$ being non-colored.
		We will show that we can always extend $\sigma$ to all edges of $G$.
		
		Without loss of generality, we may assume that $\sigma(uu') = \sigma(uv) = 1$ and $\sigma(vv')=2$.
		If $d^3(z) = 0$, then we color the edges incident with $w$ by color $3$ and we are done.
		So, we may also assume that $\sigma(xz)=3$.
		Now we consider cases regarding the color of $yz$.
		
		Suppose first that $\sigma(yz)=3$.
		Then, $d^3(x)=d^3(y)=1$ and we set $\sigma(wz)=3$.
		If $d^1(u')=1$, then we set $\sigma(uw)=1$ and $\sigma(vw)=3$, a contradiction.
		So, $d^1(u')=3$ and we consider the colors around $v'$.
		If $d^2(v')=2$, then we set $\sigma(uw)=3$ and $\sigma(uv)=\sigma(vw)=2$, a contradiction.
		Finally, if $d^2(v')=3$, then we set $\sigma(uw)=1$, $\sigma(uv)=2$, and $\sigma(vw)=3$, a contradiction.
		
		Second, suppose that $\sigma(yz)=1$.
		We set $\sigma(wz)=\sigma(vw)=2$ and consider two subcases.
		If $d^2(v')=3$, then we set $\sigma(uw)=2$.
		Otherwise, if $d^2(v')=2$, then set $\sigma(uw)=1$ and set $\sigma(uv)=2$, a contradiction.
		
		Third, suppose that $\sigma(yz)=2$.
		If $d^3(x) = 3$, then we color all edges incident with $w$ by $3$ and we are done.		
		Similarly, if $d^2(y)=3$, then we set $\sigma(wz)=2$ and $\sigma(uw)=\sigma(vw)=3$, a contradiction.
		Therefore, $d^3(x) = 2$ and $d^2(y)=2$. 		
		Suppose first that $\sigma(xy)=3$. 
		Then, $\sigma(yy')=2$. 
		If $\sigma(xx')=1$, then we set $\sigma(xy)=2$ and $\sigma(yz)=3$, which brings us to the setting resolved in the first case, a contradiction.
		If $\sigma(xx')=2$, then we recolor $xy$ and $xz$ to $1$, and color the edges incident with $w$ by $3$, a contradiction.
		
		Therefore, we may assume that $\sigma(xy)=2$. 
		If $\sigma(yy')=1$, then we set $\sigma(xy)=3$, $\sigma(xz)=2$, and color the edges incident with $w$ by $3$, a contradiction.
		Finally, if $\sigma(yy')=3$, we recolor $xy$ and $yz$ to $1$
		and consider two subcases regarding the colors at $v'$.
		If $d^2(v')=3$, then we color the edges incident with $w$ by $2$, a contradiction.
		If $d^2(v')=2$, then we set $\sigma(uw)=1$, $\sigma(uv)=2$, and color the non-colored edges incident with $w$ by $2$, a contradiction.
	\end{proofclaim}	
	
	From the above properties of $G$, it follows that every $2$-vertex in $G$ has two neighbors of degree $3$,
	every triangle is incident only with $3$-vertices, and every pair of adjacent $3$-vertices is incident with the same triangle.
	This means that by contracting every triangle in $G$ to a single vertex,
	we obtain a bipartite graph $H$ with partition $(A,B)$, 
	where all vertices in $A$ have degree $3$ and all vertices in $B$ have degree $2$.
	By K\"{o}nig's Theorem, $H$ admits a proper $3$-edge-coloring $\sigma_H$,
	which induces a partial $3$-LIEC coloring $\sigma$ of $G$ with the edges of triangles being non-colored.
	Note that every $2$-vertex in $G$ is incident with edges of distinct colors, 
	and observe also that in every triangle $T=uvw$, the three edges incident with the vertices $u$, $v$, and $w$
	not belonging to $T$ have distinct colors, say $1$, $2$, and $3$, respectively.
	Thus, we can color the edges of $T$ by setting $\sigma(uv)=1$, $\sigma(vw) = 2$, and $\sigma(uw)=3$.
	By coloring every triangle in such a way, we can thus extend the coloring $\sigma$ to all edges of $G$, a contradiction.
	This completes the proof.
\end{proof}

\section{Cycle permutation graphs and generalized Petersen graphs}
\label{sec:Petersen}

In this section, we deal with two well-known families of cubic graphs; namely,
cycle permutation graphs and generalized Petersen graphs.
We begin with definitions.

Let $G$ and $H$ be disjoint graphs on $n$ vertices for $n \geq 3$, 
with $V(G) = \{v_0, v_1,\dots, v_{n-1}\}$ 
and $V(H) = \{u_0, u_1,\dots, u_{n-1}\}$. 
Let $\phi \, : \, V(G) \rightarrow V(H)$ be a bijection between the vertices of $G$ and $H$. 
The {\em generalized permutation graph} $P_{\phi}(G,H)$ is a graph with the vertex set $V(G) \cup V(H)$ 
and the edge set $E(G) \cup E(H) \cup \set{v_i \phi(v_i) \mid \mathrm{for~} i \in \set{0,\dots,n-1}}$. 
In the case $G = H = C_n$, $P_{\phi}(C_n,C_n)$ is called a {\em cycle permutation graph}.
Note that cycle permutation graphs are cubic.

The {\em generalized Petersen graph} $P(n,k)$, with $k < \frac{n}{2}$, is a cubic graph 
with the vertex set $V(P(n,k)) = \{u_0, u_1, \dots,$ $u_{n-1}, v_0, v_1, \dots, v_{n-1}\}$ 
and the edge set $E(P(n,k)) = \{u_i u_{i+1}, u_i v_i, v_i v_{i+k} \ | \ 0 \leq i \leq n-1\}$ 
(indices are taken modulo $n$).

Let $\mathcal{R}_n^2$ be the family of all simple $2$-regular graphs on $n$ vertices.
For any $R \in \mathcal{R}_n^2$ and any bijection $\phi \, : \, V(C_n) \rightarrow V(R)$,
we call $P_\phi(C_n, R)$ a {\em ring permutation graph}.
Note that the union of all cycle permutation graphs and all generalized Petersen graphs 
is a subset of ring permutation graphs.

The main result of this section is the following.
\begin{theorem}
	\label{thm:ring}
	For any ring permutation graph $G$ it holds that $\lir(G) \le 3$.
\end{theorem}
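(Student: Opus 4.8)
\medskip
\noindent\emph{Sketch of the proof strategy.}
Throughout, write $G = P_\phi(C_n,R)$, let $O = v_0v_1\cdots v_{n-1}v_0$ be the outer cycle, let $M$ denote the perfect matching of \emph{spokes}, and recall that $R$ is a vertex-disjoint union of cycles $D_1,\dots,D_k$ with $\sum_j |D_j| = n$. The graph $G$ is connected, cubic, and (being $2$-edge-connected) bridgeless, and its natural $2$-factor is $O \cup D_1 \cup \dots \cup D_k$, whose complementary perfect matching is exactly $M$.

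The plan is to proceed as follows. First I dispose of the easy cases: if $G$ is bipartite, then the known bound for cubic bipartite graphs~\cite{BauBenPrzWoz15} already gives $\lir(G)\le 2$, so from now on $G$ contains an odd cycle; and the finitely many graphs with $n$ below a small absolute constant — in particular the smallest generalized Petersen and cycle permutation graphs, such as the Petersen graph — are settled by an explicit colouring. Note that \Cref{thm:claw} is of no help here: for $n\ge 4$ the vertex $v_i$ is the centre of the claw $\{v_{i-1}v_i,\, v_iv_{i+1},\, v_i\phi(v_i)\}$, so ring permutation graphs are essentially never claw-free.

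For the main case I construct an explicit $3$-colouring $\sigma$, coloured in three passes — the spokes first, then the outer cycle $O$, and finally the inner cycles $D_j$ — with the guiding principle that every colour class should be a disjoint union of small locally irregular subgraphs (paths $P_3$, claws $K_{1,3}$, small double stars, and, occasionally, a short even cycle with pendants). Along a $2$-factor cycle $Z = z_0z_1\cdots z_{\ell-1}$ the basic move is to colour its edges in blocks of two consecutive edges of the same colour, which produces a $P_3$ centred at the middle vertex of each block; a spoke leaving such a centre may be absorbed to form a $K_{1,3}$, whereas a spoke leaving the end vertex of a block receives the colour missing at the two incident cycle edges and is later merged into a $P_3$ running through $R$. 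When $|Z|\not\equiv 0 \pmod 4$ — in particular when $|Z|$ is odd — the block pattern does not close up; I absorb the discrepancy in one exceptional block of length $1$ or $3$ painted with the remaining ``correction'' colour, and choose the spokes at its vertices so that this short colour class is again a union of $P_3$'s and claws.

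The step I expect to be the main obstacle is \emph{global consistency}. Each $2$-factor cycle is closed, so the running block pattern must fit together after a full turn, and, more importantly, every spoke joins two (possibly equal) $2$-factor cycles and is therefore constrained from both ends at once, so the cycles cannot be coloured independently. I intend to decouple the problem: first fix all spoke colours \emph{globally}, by suitably $2$-colouring the auxiliary multigraph whose vertices are the $2$-factor cycles $O, D_1,\dots,D_k$ and whose edges are the spokes of $M$ (an Eulerian/parity argument yields the local balance that the block mechanism needs at each cycle), and only then complete the interior of every cycle, where the remaining freedom together with the correction colour always suffices. The genuinely tight configurations are those with many short inner cycles — notably several inner triangles or short odd cycles, as occur in $P(n,k)$ when $\gcd(n,k)$ is large — and these, together with the bounded-size cases above, are checked by hand.
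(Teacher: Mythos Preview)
What you have written is a programme, not a proof. Every load-bearing step is stated as an intention (``I intend to decouple the problem'', ``an Eulerian/parity argument yields the local balance'', ``the remaining freedom together with the correction colour always suffices'') rather than carried out. In particular, the global consistency step --- which you yourself identify as the main obstacle --- is not actually done: you do not specify the $2$-colouring of the auxiliary multigraph, you do not prove that any such colouring makes the block pattern close up on every $2$-factor cycle simultaneously, and you do not show that the ``correction'' blocks on different cycles never interact badly through their shared spokes. The final escape hatch, ``checked by hand'', is not bounded: a ring permutation graph on $3k$ vertices can have $k$ inner triangles (take $R$ to be $k$ disjoint triangles), so the ``tight configurations'' form an infinite family, and you have given no uniform argument for them.

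By contrast, the paper's proof avoids the whole consistency problem by a single global decomposition: it builds a locally irregular spanning subgraph $S'$ (so $\lir(S')=1$) consisting of the inner cycles with one edge deleted from each odd one, a carefully chosen half of the spokes, and one outer edge, and then observes that the complement $T' = G\setminus E(S')$ is a \emph{tree}. Three short lemmas about trees (a pendant edge at a $3$-vertex, an odd pendant path, or an odd thread between two $3$-vertices forces $\lir\le 2$) plus a parity count finish the job. There is no case analysis over spoke colourings, no auxiliary multigraph, and no ``by hand'' verification of small or tight instances. If you want to salvage your approach you would have to turn the Eulerian/parity sentence into an actual statement and proof and then genuinely verify the block closure on every cycle; it is not clear this is shorter than starting over with the decomposition idea.
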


Before we prove Theorem~\ref{thm:ring}, we introduce some additional notation and establish three auxiliary properties.

A {\em shrub} is a tree rooted at a vertex $r$ such that $d(r) = 1$; the only neighbor of the root $r$ is denoted $r^+$.
An edge coloring $\sigma$ of a shrub $T$ is an {\em almost locally irregular $2$-edge-coloring} (or a {\em $2$-ALIEC} for short) 
if it is a $2$-LIEC of $G$ 
or the edge $rr^+$ is the only edge of color $\sigma(rr^+)$ incident with $r^+$ and $\sigma$ restricted to the edges of $G - r$ is a $2$-LIEC of $G-r$.
We already mentioned that in~\cite{BauBenPrzWoz15} it was shown that every decomposable tree $T$ has $\chi_{irr}(T)\le 3$. 
In~\cite{BauBenSop15}, trees were investigated further, and the authors showed that every shrub admits a $2$-ALIEC. 
We will use this fact to prove the following lemma 
(note that the fact also follows from the proof of Lemma 3.1, the case $p=2$, in~\cite{BauBenSop15}).
\begin{lemma} 
	\label{lem:tree1}
	If a tree $T$ contains an edge $uv$ with $d(u) = 1$ and $d(v) = 3$, 
	then $\lir(T) \le 2$.
\end{lemma}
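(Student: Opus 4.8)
The plan is to bootstrap Lemma~\ref{lem:tree1} from the already-quoted fact that every shrub admits a $2$-ALIEC. Given a tree $T$ with an edge $uv$ where $d_T(u)=1$ and $d_T(v)=3$, I would view $T$ as a shrub rooted at $r=u$, so that $r^+=v$. Applying the shrub result, there is a $2$-ALIEC $\sigma$ of $T$. If $\sigma$ happens to already be a $2$-LIEC of $T$, we are done. Otherwise, by definition of $2$-ALIEC, the edge $rr^+=uv$ is the unique edge of color $\sigma(uv)$ incident with $r^+=v$, and $\sigma$ restricted to $T-r=T-u$ is a $2$-LIEC of $T-u$.

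The task is then to argue that in this remaining case $\sigma$ is in fact \emph{already} a proper $2$-LIEC of all of $T$, i.e. that adding back the pendant edge $uv$ creates no conflict. The vertex $u$ is incident only with the edge $uv$, so in the color class $\sigma(uv)$ the vertex $u$ has color degree $1$; its unique neighbor in that color class is $v$. Since $uv$ is the only edge of color $\sigma(uv)$ at $v$ (in all of $T$), the vertex $v$ also has color degree $1$ in the class $\sigma(uv)$ — but wait, that would make $u$ and $v$ both degree-$1$ in that color, which is a conflict. So the key observation must instead be that we are free to \emph{recolor}: since $d_T(v)=3$, the vertex $v$ is incident with two other edges, $vx$ and $vy$ say, both colored under the $2$-LIEC of $T-u$. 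The plan is to choose the color of $uv$ to be whichever of $\{1,2\}$ makes $v$'s color degrees distinct from $u$'s: set $\sigma(uv)=c$ where $c$ is a color such that, among $vx,vy$, at least one already has color $c$, so that in class $c$ the vertex $v$ has color degree $\geq 2 > 1 = d^c_\sigma(u)$, while in the other class $u$ contributes nothing and the restriction to $T-u$ is untouched. Because $v$ has exactly two other incident edges, either both have the same color (then pick that color for $uv$, giving $d^c(v)=3$), or they use both colors (then either choice of $c$ gives $d^c(v)=2$); in every case one can pick $c$ with $d^c(v)\ge 2$.

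The remaining point to check is that this recoloring of the single edge $uv$ does not spoil local irregularity anywhere in $T-u$: the only color degrees that change when we (re)assign a color to $uv$ are those of $u$ (harmless, $u$ is a leaf) and of $v$. So I must verify that with the chosen color $c$, the vertex $v$ still differs in color degree from each of its neighbors $x,y$ in \emph{both} color classes. In class $c$, $v$ now has color degree $d^c_{T-u}(v)+1$; since the restriction to $T-u$ was a valid $2$-LIEC, $v$ differed from $x$ and $y$ there, but incrementing by one could accidentally create equality. This is the one genuinely delicate case-check and the main obstacle: I expect to handle it by a small case analysis on the color pattern of the two non-pendant edges at $v$ (both $c$; both the other color; one of each), in each case picking $c$ so that $d^c(v)\in\{2,3\}$ lands on a value that no neighbor can match — using that each neighbor of $v$ has degree at most $3$, hence color degree at most $3$ in a given class, and that in the "$d^c(v)=3$" subcase equality with a neighbor would force that neighbor to also have all three of its edges colored $c$, which one rules out by a further local swap or by appealing to the LIEC property of $T-u$ at that neighbor. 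Once this finite check goes through, $\sigma$ is a $2$-LIEC of $T$, so $\lir(T)\le 2$.
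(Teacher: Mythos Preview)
Your setup matches the paper's: root $T$ at $u$, take a $2$-ALIEC $\sigma$, and in the failure case observe that $uv$ is the unique edge of its color at $v$. But note that this already forces the color pattern at $v$: if $\sigma(uv)=1$, then \emph{both} of $vv_1,vv_2$ have color $2$. So your ``small case analysis on the color pattern of the two non-pendant edges at $v$'' collapses to a single case, and the only recoloring of $uv$ that avoids the $u$--$v$ conflict is $\sigma(uv):=2$, giving $d^2(v)=3$.

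This is exactly where your argument has a real gap. You propose to rule out $d^2(v_i)=3$ ``by appealing to the LIEC property of $T-u$ at that neighbor'', but that appeal fails: in $T-u$ we have $d^2(v)=2$, so the LIEC condition only forces $d^2(v_i)\in\{1,3\}$, and the bad value $3$ is perfectly consistent with $\sigma$ being a $2$-LIEC of $T-u$. Your fallback ``further local swap'' is where the actual content lies, and it is not local in the sense of one or two edges: the paper's fix is to swap colors $1\leftrightarrow 2$ on the \emph{entire} branch $T_1$ of $T$ through $v_1$ (the component of $T-v$ containing $v_1$, together with $vv_1$). A global swap on a subtree preserves the LIEC inside that subtree, so the only vertex to recheck is $v$; after the swap $d^1(v)=2$ against $d^1(u)=1$ and $d^1(v_1)=3$, and the lone possible residual conflict is $d^2(v)=1=d^2(v_2)$, which is then cured by recoloring $uv$ back to $2$. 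Without this subtree-swap idea (or an equivalent), a single-edge recoloring of $uv$ cannot finish the proof.
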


\begin{proof}
	Suppose to the contrary that $T$ does not admit a $2$-LIEC.	
	Let $T$ be a shrub with the root $u$, and let $v_1$, $v_2$ be the neighbors of $v$ distinct from $u$. 
	By~\cite[Theorem~3.2]{BauBenSop15}, $T$ admits a $2$-ALIEC $\sigma$ with colors from $\set{1,2}$. 
	Without loss of generality, $\sigma(uv) = 1$, $\sigma(vv_1) = \sigma(vv_2) = 2$, and $\sigma$ induces a $2$-LIEC of $T' = T-u$. 
	We will show that we can modify $\sigma$ to obtain a $2$-LIEC of the whole shrub $T$.

	Let $T_1$ be the component of $T - v$ containing $v_1$ to which we add the edge $vv_1$,
	and similarly, let $T_2$ be the component of $T - v$ containing $v_2$ to which we add the edge $vv_2$.
	Clearly, we cannot recolor $uv$ with color $2$, as that would imply that $T$ admits a $2$-LIEC.
	Therefore, at least one from $v_1$ and $v_2$, say $v_1$, is incident with three edges of color $2$. 
	Now, we swap the colors $1$ and $2$ in $T_1$ to obtain a coloring $\sigma'$.
	Note that either $\sigma'$ is a $2$-LIEC of $T$ or $d_{\sigma'}^2(v_2) = 1$.
	But in the latter case, we recolor $uv$ with color $2$ and we are done.
\end{proof}

\begin{lemma}
	\label{lem:tree2}
	If a tree $T$ contains a pendant $(2k-1)$-path, then $\lir(T) \le 2$.
\end{lemma}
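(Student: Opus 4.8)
The plan is to prove this by the same "shrub + ALIEC" machinery used in Lemma~\ref{lem:tree1}, reducing to the structure result on shrubs from~\cite{BauBenSop15}. Suppose $T$ contains a pendant $(2k-1)$-path $P = v_1, v_2, \dots, v_{2k}$, so $d_T(v_1) = 1$, $d_T(v_i) = 2$ for $1 < i < 2k$, and $d_T(v_{2k}) \geq 2$. Root $T$ at the leaf $v_1$, so $T$ becomes a shrub with root $r = v_1$ and $r^+ = v_2$. By~\cite[Theorem~3.2]{BauBenSop15}, $T$ admits a $2$-ALIEC $\sigma$ using colors $\{1,2\}$; if $\sigma$ is already a $2$-LIEC of $T$ we are done, so assume instead that the edge $rr^+ = v_1v_2$ is the unique edge of its color incident with $r^+ = v_2$ and that $\sigma$ restricted to $T - v_1$ is a $2$-LIEC. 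The goal is to fix up $\sigma$ along the path $P$ so that it becomes a genuine $2$-LIEC of all of $T$.

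First I would observe that the only possible failure of local irregularity in $\sigma$ as a coloring of $T$ is at the pair $\{v_1, v_2\}$: since $\sigma(v_1v_2)$ is the only edge of that color at $v_2$, vertex $v_2$ has color degree $1$ in that color, and the issue is only that $v_1$ also has color degree $1$ in the same color (it has nothing else), so $v_1v_2$ induces a $K_2$ in its color class. The remedy is to recolor a suitable alternating prefix of $P$: the key point is that along a path of even length $2k-2$ between $v_2$ and $v_{2k}$, one can walk from $v_1$ and flip colors edge-by-edge, and because the path has an even number of internal edges the only vertices whose color degrees change are the endpoints $v_1$ and $v_{2k}$ (internal $2$-vertices keep one edge of each color, or two of one color, depending on the local pattern — this needs a short case check on whether consecutive path edges agree in color). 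The idea is that recoloring $v_1v_2$ to the other color $2$, and then propagating a correction along $P$ until it is absorbed, yields a $2$-LIEC; the correction is "absorbed" either because it reaches a $2$-vertex where the two incident path-edges already differ in color (so no propagation is needed) or because it reaches $v_{2k}$, where the extra degree constraint from the rest of $T$ must be checked.

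I would organize the argument as a short walk along $P$ starting from $v_2$: let $j$ be the smallest index such that the coloring is "locally fine" at $v_j$ after the proposed flip, and argue $j$ exists and is at most $2k$. Because the path has odd length ($2k-1$ edges), the parity works out so that a maximal alternating run along $P$ either terminates at an internal vertex — giving a clean fix — or reaches the far end $v_{2k}$; in the latter case one uses the fact that $\sigma$ was already a $2$-LIEC on $T - v_1 \supseteq$ (everything beyond $v_{2k}$), so the only thing to verify is the color degree of $v_{2k}$, which can be balanced by choosing the initial flip appropriately, or by a final swap analogous to the "recolor $uv$ with color $2$" move at the end of Lemma~\ref{lem:tree1}. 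The main obstacle I anticipate is precisely this endpoint case at $v_{2k}$: ensuring that after the flips the color degrees of $v_{2k}$ and its neighbors outside $P$ are still distinct. I expect this is handled by noting $2k-1$ is odd, so the flip changes $v_{2k}$'s color degrees by $\pm 1$ in exactly one color, and one of the two choices of which color to start the flip with (equivalently, whether to recolor $v_1v_2$ or to leave it and recolor from $v_2v_3$ onward) avoids creating a conflict — exactly the kind of parity-driven dichotomy that makes odd pendant paths tractable while even ones are not.
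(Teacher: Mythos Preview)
Your plan takes a different route from the paper and, as written, has a real gap at the endpoint.

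The paper's proof is much simpler and avoids propagation entirely: it splits $T$ into the even path $P' = v_1,\dots,v_{2k-1}$ (which trivially admits a $2$-LIEC) and the shrub $T' = T\setminus\{v_1,\dots,v_{2k-2}\}$, which now has a leaf $v_{2k-1}$ adjacent to the $3$-vertex $v_{2k}$ and hence admits a $2$-LIEC by Lemma~\ref{lem:tree1}. The two colorings glue at $v_{2k-1}$ simply by permuting colors on $P'$ so that $v_{2k-2}v_{2k-1}$ and $v_{2k-1}v_{2k}$ receive distinct colors. No case analysis, no propagation.

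Your approach, by contrast, tries to repair the $2$-ALIEC in place. The difficulty is that in the ``bad'' case the coloring along the path is completely rigid: since $\sigma|_{T-v_1}$ is a $2$-LIEC and $v_2$ is a leaf there, one checks that the colors of $v_2v_3,v_3v_4,\dots$ are forced into the periodic pattern $c,c,\bar c,\bar c,c,c,\dots$ all the way to $v_{2k-1}v_{2k}$. So there is no internal vertex where the correction is ``absorbed''; any fix must alter the color of an edge incident with $v_{2k}$. At that point your appeal to parity (``one of the two choices avoids creating a conflict'') is unjustified: changing $\sigma(v_{2k-1}v_{2k})$ shifts $d^1(v_{2k})$ and $d^2(v_{2k})$ by $\pm 1$, and whether this creates a conflict with the other neighbors of $v_{2k}$ depends on the coloring of the rest of $T$, about which the $2$-ALIEC gives you no control. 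The ``final swap analogous to Lemma~\ref{lem:tree1}'' you allude to worked there because $v$ had two subtrees hanging off it to play against each other; here $v_{2k-1}$ has only one edge into the rest of $T$, so that trick is unavailable. The decomposition in the paper's proof is exactly what sidesteps this: it hands the endpoint problem to Lemma~\ref{lem:tree1}, which \emph{does} have the two-subtree structure needed for the swap.
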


\begin{proof}
	Let $T$ be a tree with a pendant $(2k-1)$-path $P=v_1,\dots, v_{2k}$. 
	We proceed by induction on the length of $P$. 
	If $k=1$, then $T$ contains an edge $v_1v_2$ with $d(v_1) = 1$ and $d(v_2) = 3$, 
	and hence it admits a $2$-LIEC by \cref{lem:tree1}. 
	So suppose that $k \geq 2$. 
	In this case, we can split $T$ into the shrub $T' = T - \set{v_1,\dots,v_{2k-2}}$ and the path $P'=v_1,\dots,v_{2k-1}$.
	Both of them admit a $2$-LIEC: $T'$ by \cref{lem:tree1} and $P'$ since it has an even length.
	Since the two colorings can easily be combined into a $2$-LIEC of $T$ 
	(by setting distinct colors for $v_{2k-2} v_{2k-1}$ and $v_{2k-1}v_{2k}$),
	we are done.
\end{proof}

\begin{lemma} 
	\label{lem:tree3}
	If a tree $T$ contains a $(2k+1)$-thread incident with two vertices of degree $3$, then $\lir(T) \le 2$.
\end{lemma}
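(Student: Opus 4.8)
The statement to prove: if a tree $T$ contains a $(2k+1)$-thread incident with two vertices of degree $3$, then $\lir(T) \le 2$. So we have a path $Q = x, v_1, v_2, \dots, v_{2k+1}, y$ where $d_T(v_i) = 2$ for all $i$, $d_T(x) = d_T(y) = 3$, and the $v_i$ form an internal thread. The plan is to cut $T$ along this thread into two shrubs, use the fact (from \cite{BauBenSop15}) that every shrub admits a $2$-ALIEC, and then stitch the two colorings back together along the odd thread, using its odd length to absorb the one possibly-bad edge from each ALIEC.

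First I would split $T$ at a well-chosen interior edge of the thread. Write $T = T_x \cup P \cup T_y$ where $T_x$ is the component of $T - v_j v_{j+1}$ containing $x$ (rooted at a subdivided-in endpoint so that it becomes a shrub whose root-edge is the thread edge $v_j v_{j+1}$, or more cleanly: let $T_x$ be $T$ restricted to one side with the thread truncated to a single pendant edge at the cut), and $T_y$ the other side. By \cref{lem:tree1} applied to suitable sub-shrubs, or directly by the $2$-ALIEC theorem of \cite{BauBenSop15}, each of $T_x$ and $T_y$ admits a $2$-ALIEC $\sigma_x$, $\sigma_y$; the ``almost'' defect, if present, is confined to the unique edge incident to the branch vertex ($x$ resp.\ $y$) along the thread. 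The remaining freedom is that the long odd thread $P$ (which has even length as a path once we account for parity, or can be $2$-LIEC-colored flexibly) lets us assign colors to its internal edges $v_i v_{i+1}$ freely subject only to the local-irregularity constraint at each internal $2$-vertex, namely that consecutive thread edges may coincide in color but then the pattern must not create a monochromatic $K_2$-component — which over an odd number of edges is always arrangeable so that the two endpoints (the edges meeting $x$ and $y$) get whatever colors $\sigma_x$, $\sigma_y$ demand, and so that the defect edges at $x$ and at $y$ get ``fixed'' by choosing the adjacent thread edge to have a matching color, thereby making $x$ (resp.\ $y$) incident to $\ge 2$ edges of that color.

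Concretely, I expect the casework to break on whether the ALIEC of $T_x$ is genuinely a $2$-LIEC or has a defect at $x$, and likewise for $y$ — four cases. In the no-defect/no-defect case we just need to color the $2k+1$ thread edges so that $x$ and $y$ keep their color degrees valid; since the thread is long and odd this is immediate (e.g.\ alternate, adjusting the last edge). In a defect case, say at $x$: the edge $xv_1$ is currently the only edge of its color at $x$; we set $\sigma(v_1 v_2)$ equal to that color, which repairs $x$, then continue alternating down the thread, again using the odd length to land correctly at $y$. If both $x$ and $y$ are defective, we repair both ends simultaneously and the parity of the thread guarantees the middle can be filled without creating a monochromatic component (this is where oddness of $2k+1$ is used, analogous to the even-path argument in \cref{lem:tree2}).

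The main obstacle I anticipate is bookkeeping the color-degree conditions at $x$ and $y$ after recoloring: a repair at $x$ changes $d^i(x)$, and one must check this does not collide with a neighbor of $x$ inside $T_x$ that has the same color degree — but since $\sigma_x$ was a valid $2$-LIEC on $T_x - x$ and the defect edge was isolated in its color class at $x$, bumping that color at $x$ from $1$ to $2$ only helps. The other subtlety is ensuring the internal thread vertices $v_i$ are themselves locally irregular, i.e.\ no edge $v_i v_{i+1}$ forms a monochromatic component: this forces the thread coloring to avoid an isolated same-colored edge, which an alternating-with-one-adjustment pattern on an odd number of edges always achieves. I would present the four cases briefly, as the computations are routine once the setup is fixed.
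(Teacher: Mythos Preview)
Your plan shares the paper's skeleton—split the tree along the thread and $2$-color each piece—but the paper's execution is far more economical. Rather than cutting at an interior edge and running a four-case ALIEC analysis, the paper deletes the single vertex $v_2$: the component $T_1$ containing $u_1$ then has the leaf $v_1$ adjacent to the $3$-vertex $u_1$, so \cref{lem:tree1} already gives a genuine $2$-LIEC (no ``almost''); the complementary tree $T_2$ induced by $E(T)\setminus E(T_1)$ carries the pendant $(2k+1)$-path $v_1,\dots,v_{2k+1},u_2$, so \cref{lem:tree2} gives it a $2$-LIEC as well. The two colorings glue at $v_1$ simply by permuting so that $\sigma(u_1v_1)\neq\sigma(v_1v_2)$—one line, no casework. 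Your route bypasses \cref{lem:tree2} and instead colors the thread by hand, which is what forces the defect bookkeeping. It can be made to work if you invoke \cref{lem:tree1} on \emph{both} shrubs (so you are always in your no-defect/no-defect case) and then color the $2k$ middle edges in alternating same-color pairs, swapping colors in $T_y$ to fit; but as written the parity talk is muddled (the middle segment has $2k$ edges, not an odd number), and the mixed-defect cases are not as routine as you suggest. For example with $k=1$ and a defect only at $x$, repairing forces $\sigma(v_1v_2)=\sigma(xv_1)$ and then $\sigma(v_2v_3)$ opposite, which in turn pins $d^{\sigma(v_3y)}(y)$ to a value you cannot guarantee from an arbitrary $2$-LIEC of $T_y$. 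The paper sidesteps all of this by pushing the whole thread into the side that \cref{lem:tree2} handles.
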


\begin{proof}
	Let $T$ be a tree with a $(2k+1)$-thread $v_1,\dots,v_{2k+1}$ 
	such that $v_1$ has a $3$-neighbor $u_1$ and $v_{2k+1}$ has a $3$-neighbor $u_2$. 
	We split $T$ into two shrubs $T_1$ and $T_2$,
	where $T_1$ is the component of $T-v_2$ containing $u_1$
	and $T_2$ is the tree induced by the edges in $E(T) \setminus E(T_1)$.	
	Both shrubs admit a $2$-LIEC: $T_1$ by \cref{lem:tree1} and $T_2$ by \cref{lem:tree2}. 
	Combining the two colorings 
	(by setting distinct colors for $u_1v_1$ and $v_1v_2$), 
	we obtain a $2$-LIEC of $T$.
\end{proof}

Now, we are ready to prove the theorem.
\begin{proof}[Proof of Theorem~\ref{thm:ring}]
	Let $G = P_\phi(C_n,R)$ be a ring permutation graph.
	Label the vertices of the cycle $C_n$ (consecutively) with $v_1,\dots,v_{n}$.
	Let $R$ be composed of $k$ cycles, denoted $C_{i,\ell_i}$ for $i \in \set{1,\dots,k}$, where $\ell_i$ denotes the length of the cycle,
	and label the vertices of the cycle $C_{i,\ell_i}$ (consecutively) with $v_{i,1},\dots,v_{i,\ell_i}$.
	Note that $\sum_{i=1}^k \ell_i = n$.

	Now, consider the spanning subgraph $S$ of $G$, with the edge set defined such that for every $i \in \set{1,\dots,k}$:
	\begin{itemize}
		\item[$(a)$] if $\ell_i$ is even, 
					 then $E(C_{i,\ell_i}) \subset E(S)$ and $\phi^{-1}(v_{i,2j})v_{i,2j} \in E(S)$ for $j = 1,\dots,\frac{\ell_i}{2}$;		
		\item[$(b)$] if $\ell_i$ is odd, 
					 then $E(C_{i,\ell_i})\setminus \set{v_{i,1}v_{i,\ell_i}} \subset E(S)$, 
					 $\phi^{-1}(v_{i,2j})v_{i,2j} \in E(S)$ for $j = 1,\dots,\frac{\ell_i-1}{2}$, 
					 and $\phi^{-1}(v_{i,\ell_i})v_{i,\ell_i} \in E(S)$.
	\end{itemize}

	(Note that $S$ may contain some isolated vertices.) The vertices of $C_n$ can therefore be divided into three sets:
	\begin{itemize}
		\item{} $X_1 = \set{v \in V(C_n) \ | \ d_S(v) = 0}$, i.e.,
			the vertices that are not incident with any edge of $S$.
		
		\item{} $X_2 = \set{v \in V(C_n) \ | \ \exists i \ : \ \ell_i \textrm{ is odd and } v = \phi^{-1}(v_{i,\ell_i})}$, i.e.,
			the vertices whose neighbors in $S$ have degree $2$.

		\item{} $X_3 = V(C_n) \setminus (X_1 \cup X_2)$, i.e.,
			the vertices whose neighbors in $S$ have degree $3$.
	\end{itemize}
	Note that $X_1$ and $X_3$ are non-empty, and $|X_2|$ is equal to the number of odd cycles in $R$. 
	Moreover, note that $S$ is locally irregular.

	Suppose now that there is an edge $uv$ of the cycle $C_n$ such that $u \in X_1$ and $v \in X_3$.
	Then the graph $S' = S \cup \set{uv}$ is also locally irregular, and so $\lir(S') = 1$. 
	Otherwise, if there is no edge in $C_n$ with endvertices from $X_1$ and $X_3$,
	then there is an edge $e = uv$ of the cycle $C_n$ such that $u \in X_2$ and $v \in X_3$.
	Let $u = \phi^{-1}(v_{i,\ell_i})$ for some $i$ (note that $\ell_i$ is odd).
	Now, replace the edge $uv_{i,\ell_i}$ in $S$ with the edge $\phi^{-1}(v_{i,1})v_{i,1}$.
	In this way, we have that $u \in X_1$ and $v \in X_3$.
	Thus, the graph $S' = S \cup \set{uv}$ is again locally irregular and thus $\lir(S') = 1$.
	
	We finalize the proof by considering the graph $T' = G \setminus E(S')$ and showing that it admits a $2$-LIEC.
	It is easy to see that $T'$ is a tree. 
	If any cycle in $R$ has length at least $4$, 
	then $T'$ contains an edge with endvertices of degree $1$ and $3$, and so, by \cref{lem:tree1}, it admits a $2$-LIEC,
	implying that $G$ admits a $3$-LIEC.
	Thus, we may assume that all cycles in $R$ have length $3$, 
	meaning that $n=3k$ and $|X_1|=|X_2|=|X_3|=k$ (recall that the sets are defined in the graph $S$).
	Since the edge $uv$ from $C_n$ is added to $S'$, 
	we have that $|E(T') \cap E(C_n)| = 3k - 1 \equiv k-1 \bmod 2$
	and the number of $3$-vertices in $T'$ is $k-1$ (all of them belong to $X_1$).	
	
	If in $T'$ there is a $3$-vertex incident with a pendant $(2t+1)$-path, for some positive integer $t$, 
	then, by \cref{lem:tree2}, $T'$ admits a $2$-LIEC. 
	If in $T'$ there are two $3$-vertices joined by a $(2t+1)$-thread (i.e., a $(2t+2)$-path), 
	then, by \cref{lem:tree3}, $T'$ also admits a $2$-LIEC. 
	Otherwise, every pendant path has even length and every path between two $3$-vertices has odd length.
	Note that on every pendant path, there are precisely two edges not belonging to $C_n$,
	and thus even number of edges from $C_n$ belong to pendant paths.
	This means, since there are $k-2$ (odd) paths between $3$-vertices of $T'$,
	that $|E(T') \cap E(C_n)| \equiv (k-2) \bmod 2$, 
	a contradiction.
	This completes the proof.
\end{proof}

\section{Remarks on $2$-colorability}
\label{sec:2col}

In this section we focus on graphs which do not admit a $2$-LIEC. 
First, let us consider graphs with minimum degree $1$. 
Baudon et al.~\cite{BauBenPrzWoz15} showed that there are infinitely many trees which require $3$ colors for a locally irregular edge-coloring. 
On the other hand, all trees with $\Delta(G) \geq 5$ admit a $2$-LIEC~\cite[Theorem~3.3]{BauBenSop15}. 

Similarly, for graphs with minimum degree $2$, there also exists an infinite family of graphs which do not admit a $2$-LIEC~\cite{Sea19}. 
For example, the graph $H$ depicted in Figure~\ref{fig:4t+1} has $\lir(H) = 3$; 
in fact, every graph $G$ composed of two adjacent vertices $u$ and $v$ additionally connected with $k \ge 2$ paths of lengths $4t + 1$ for any $t \ge 1$
has $\lir(G) = 3$.
Note that graphs in the described family can have arbitrarily high maximum degree and arbitrarily high girth.
Moreover, they are all bipartite and thus the ones with even degrees of $u$ and $v$ provide examples 
for a negative answer to Question~1 asked in~\cite{LuzPrzSot18}.
\begin{figure}[htp!]
	$$
		\includegraphics{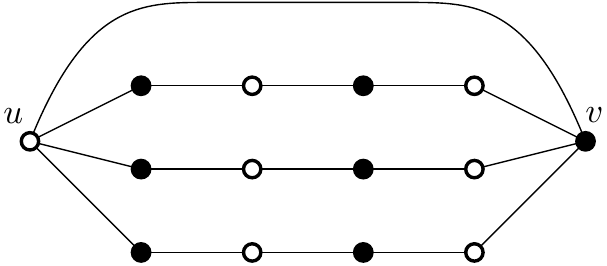}
	$$	
	\caption{The bipartite graph $H$ with minimum degree $2$ and $\lir(H)=3$.}
	\label{fig:4t+1}
\end{figure}

In the remainder of this section, we focus on graphs with minimum degree $3$; in particular, cubic graphs.

A result on the detection number of graphs due to Havet et al.~\cite{HavParSam14} 
implies that deciding whether a cubic graph admits a $2$-LIEC is NP-complete.
Our goal here is to determine properties of cubic graphs allowing the $2$-colorability.
For example, it is known that every regular bipartite graph with minimum degree at least $3$
(and hence every cubic bipartite graph)
admits a $2$-LIEC~\cite{BauBenPrzWoz15,HavParSam14}. 
On the other hand, not even all generalized Petersen graphs are such, e.g., $\lir(P(7,2)) = 3$. 

One may thus naturally ask which are the conditions causing a cubic graph not to admit a $2$-LIEC.
The following proposition asserts that, e.g., adjacent diamonds in a graph are one of them.
\begin{proposition}
	\label{prop:diam}
	If a cubic graph $G$ contains two diamonds connected with an edge as a subgraph, 
	then $\lir(G) \ge 3$.
\end{proposition}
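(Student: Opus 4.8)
The plan is to argue directly about the local structure forced by a diamond in a cubic graph and show that no $2$-LIEC can exist. Recall that a diamond is $K_4$ minus an edge; in a cubic graph, a diamond on vertices $a,b,c,d$ (with $b,c$ the two degree-$3$-within-the-diamond vertices adjacent to everything, and $a,d$ the two "tips") must have $a$ and $d$ each with exactly one further edge leaving the diamond, while $b$ and $c$ are already saturated inside it. Two diamonds connected by an edge means the leaving edge at a tip $a$ of one diamond is joined to a tip $a'$ of the other. First I would fix a hypothetical $2$-LIEC $\sigma$ with colors $\{1,2\}$ and restrict attention to a single diamond $D$ on $\{a,b,c,d\}$ with the five edges $ab, ac, bc, bd, cd$ and the one pendant-type edge $e$ at, say, the connecting tip.

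The key step is a short case analysis on how $\sigma$ colors the five internal edges of $D$ together with $e$. Each monochromatic class must induce a locally irregular graph, so inside a color class no edge can join two vertices of equal color-degree. The vertices $b$ and $c$ have total degree $3$ with all three edges inside $D$, and they are adjacent via $bc$; the vertices $a$ and $d$ have degree $3$ but with only two edges inside $D$ (plus their one external edge). I would enumerate, up to the symmetry swapping $b\leftrightarrow c$, swapping $a\leftrightarrow d$, and swapping the two colors, the colorings of $\{ab,ac,bc,bd,cd\}$: since $bc$ gets some color, say $1$, and $b$ needs $d^1(b)\ne d^1(c)$, one checks that the triangle-like constraints around $b$ and $c$ force a very rigid pattern, and then the tip vertices $a,d$ impose that each of $a$ and $d$ has a prescribed pair of color-degrees among its two internal edges. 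The upshot I expect is that in every legal coloring of one diamond, the external edge $e$ at a tip is forced to receive a specific color, and moreover the tip $a$ is forced to have $d^{\sigma(e)}(a)$ equal to a specific value determined by the internal coloring — i.e., the coloring of $D$ "dictates" both the color and the resulting color-degree at the connecting end of $e$.

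Having pinned down this rigidity for a single diamond, I would apply it to both diamonds sharing the edge $e = aa'$. From the analysis of the first diamond, $\sigma(e)$ and $d^{\sigma(e)}(a)$ are forced; from the analysis of the second diamond (applied symmetrically at its tip $a'$), $\sigma(e)$ and $d^{\sigma(e)}(a')$ are forced as well. Then I would show these two forcings are incompatible with the requirement that $\sigma$ restricted to the color class $\sigma(e)$ be locally irregular: either the two forced colors of $e$ disagree (immediate contradiction, since $e$ has one color), or they agree and the edge $e$ then joins $a$ and $a'$, which turn out to have the same forced color-degree in that class — contradicting local irregularity of the class. Either way, no $2$-LIEC exists, so $\lir(G)\ge 3$.

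The main obstacle I anticipate is keeping the diamond case analysis genuinely short: a naive enumeration over $2^5$ colorings of the internal edges, crossed with the external edge, is clumsy, so the real work is choosing the right reductions (color swap, the $b\leftrightarrow c$ and $a\leftrightarrow d$ symmetries, and fixing $\sigma(bc)=1$ at the outset) so that only a handful of essentially different cases remain, and then extracting from each the clean statement "$\sigma(e)$ and $d^{\sigma(e)}(\text{tip})$ are forced." Once that lemma-like fact about one diamond is in hand, gluing two diamonds to reach the contradiction is routine.
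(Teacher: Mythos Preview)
Your approach is essentially the paper's: fix a hypothetical $2$-LIEC, analyze one diamond to constrain the color-degree of its connecting tip in the color of the bridging edge $e$, and then play the two diamonds off against each other along $e$; the paper does the same case analysis inline (first ruling out $d^{\sigma(e)}(a)=1$ via the near triangle, then ruling out $d^{\sigma(e)}(a)=3$ via the far tip), while you package it as a single-diamond lemma. One clarification worth making before you write it up: the clean statement your case analysis will actually produce is that $d^{\sigma(e)}(a)=2$ for \emph{every} diamond tip $a$ (you will implicitly use that the far tip $d$ also has degree $3$ in $G$, so do not forget its external edge), and hence your ``two forced colors of $e$ disagree'' branch is vacuous --- the contradiction is simply that both endpoints of $e$ are forced to color-degree $2$.
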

\begin{figure}[htp!]
	$$
		\includegraphics{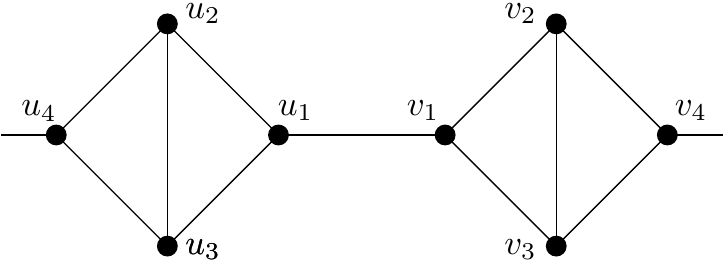}
	$$
	\caption{A cubic graph with two diamonds connected with an edge does not admit a $2$-LIEC.}
	\label{fig:double}
\end{figure}

\begin{proof}
	Suppose the contrary and let $G$ be a cubic graph containing two adjacent diamonds 
	(with the vertices labeled as depicted in Figure~\ref{fig:double})
	and let $\sigma$ be a $2$-LIEC of $G$ with the colors from $\set{1,2}$.
	It is easy to see that $G$ does not contain any monochromatic $3$-cycle, 
	otherwise at least two vertices of this $3$-cycle would have the same color degree in $\sigma$.

	Now, without loss of generality, we may assume that $\sigma(u_1v_1)=1$. 
	If $\sigma(v_1v_2)=\sigma(v_1v_3)=2$, then $\sigma(v_2v_3)=1$, otherwise we obtain a monochromatic $3$-cycle.  
	Moreover, exactly one of the edges $v_2v_4$ and $v_3v_4$ has color $1$, otherwise $d^1(v_2) = d^1(v_3)$.
	Without loss of generality let $\sigma(v_2v_4)=1$ and $\sigma(v_3v_4)=2$, but then $d^2(v_1) = d^2(v_3)$, a contradiction. 

	Therefore, at least one of the edges $v_1v_2$ and $v_1v_3$ has color $1$ 
	and, by the symmetry, at least one of the edges $u_1u_2$ and $u_1u_3$ has color $1$.	
	Since $\sigma$ is a $2$-LIEC, one of the vertices $u_1$ and $v_1$ is incident with two edges of color $1$ and the other 
	with three edges of color $1$, 
	say $d^1(v_1) = 2$ and $d^1(u_1) = 3$. 
	Then $\sigma(u_2u_3)= 2$ and, by the same reasoning as above, 
	$\sigma(u_2u_4) \neq \sigma(u_3u_4)$. 
	However, regardless of the color used on the edge incident with $u_4$ distinct from $u_2u_4$ and $u_3u_4$, 
	we obtain two adjacent vertices with the same color degree, a contradiction.
\end{proof}

In the proof of the NP-completness of deciding whether a cubic graph admits a $2$-LIEC,
many 3-cycles (even diamonds) are used. 
So, one may ask what happens if short cycles, particularly triangles, are forbidden.
The next theorem shows that there exists an infinite family of (cycle permutation) graphs with girth at least $4$ which do not admit a $2$-LIEC. 

Before stating the theorem, we define the graphs in the above-mentioned family.
The graph $\mathrm{XI}_n$ is a graph comprised of 
two cycles $v_1,\dots,v_{3n}$ and $u_1,\dots,u_{3n}$ with additional edges 
$v_{3i}u_{3i+1}$, $v_{3i+1}u_{3i}$, and $v_{3i+2}u_{3i+2}$ for all $i \in \{0,\dots,n-1\}$ (indices taken modulo $3n$). 
Note that $g(\mathrm{XI}_n) = 4$ for all $n \geq 2$.

\begin{theorem}
	\label{thm:theoremXI}
	For every positive integer $k$, we have
	$\lir(\mathrm{XI}_{2k}) = 2$ and $\lir(\mathrm{XI}_{2k+1}) = 3$. 
\end{theorem}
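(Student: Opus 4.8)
The plan is to treat the two parity classes separately. For the even case $\mathrm{XI}_{2k}$, the first step is to exhibit an explicit $2$-LIEC (so that $\lir(\mathrm{XI}_{2k}) \le 2$; the lower bound $\lir(\mathrm{XI}_{2k}) \ge 2$ is automatic since $K_2$ is not locally irregular and the graph is connected with at least one edge). The natural idea is to periodically color the edges with period dividing $3\cdot 2k = 6k$: assign colors to the two cycle-edges around each block $\{v_{3i},v_{3i+1},v_{3i+2}\}$ and $\{u_{3i},u_{3i+1},u_{3i+2}\}$ together with the three "rung" edges $v_{3i}u_{3i+1}$, $v_{3i+1}u_{3i}$, $v_{3i+2}u_{3i+2}$ in a way that alternates between even blocks and odd blocks, so that each vertex sees a color pattern ($3{-}0$, $2{-}1$, $1{-}2$, $0{-}3$) distinct from that of each of its three neighbors. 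Since $2k$ is even, the period-$2$ alternation of blocks closes up consistently around both cycles; I would write the pattern out on one pair of consecutive blocks and then verify local irregularity of both color classes by a finite check at the $O(1)$ vertex types. This is routine and I would present it compactly as a table of the six edge-colors per block pair.

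For the odd case, the goal is $\lir(\mathrm{XI}_{2k+1}) \ge 3$, which we prove by contradiction: suppose $\sigma$ is a $2$-LIEC with colors $\{1,2\}$. First I would record the same easy observation used in Proposition~\ref{prop:diam}: since $g(\mathrm{XI}_n)=4$ there are no monochromatic triangles to worry about, but more usefully, $\mathrm{XI}_n$ has no triangles at all, so each color class is a subgraph of a cubic triangle-free graph, and local irregularity forces strong constraints on the color-degree sequence $(d^1(x),d^2(x)) \in \{(3,0),(2,1),(1,2),(0,3)\}$ at every vertex — adjacent vertices must differ in this pair. The key structural step is to analyze how the color-degree "type" propagates along the $v$-cycle and $u$-cycle using the rung edges. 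I expect to define, for each block index $i$, the types of $v_{3i},v_{3i+1},v_{3i+2},u_{3i},u_{3i+1},u_{3i+2}$ and show that the local constraints (around the $4$-cycles $v_{3i}v_{3i+1}u_{3i}u_{3i+1}$ and around the cycle edges) force a $\mathbb{Z}/2$-valued invariant — e.g. a parity of the number of color-$1$ rung edges, or a parity of how the pattern shifts per block — that must flip exactly once per block yet return to its starting value after $2k+1$ blocks, which is impossible.

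The main obstacle, and the part requiring the most care, is pinning down exactly which local invariant is being transported and proving it flips with each block. Concretely: around the $4$-cycle $v_{3i}v_{3i+1}u_{3i+1}u_{3i}$ (edges $v_{3i}v_{3i+1}$, $v_{3i+1}u_{3i}$, $u_{3i}u_{3i+1}$ via... — here one must be careful with the actual rung pattern $v_{3i}u_{3i+1}$, $v_{3i+1}u_{3i}$), one gets a parity relation among the four edge-colors of that cycle; the third rung $v_{3i+2}u_{3i+2}$ together with the cycle edges $v_{3i+1}v_{3i+2}$, $v_{3i+2}v_{3i+3}$, $u_{3i+1}u_{3i+2}$, $u_{3i+2}u_{3i+3}$ links block $i$ to block $i+1$. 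I would set up a small system: let $a_i$ be the number (mod $2$) of color-$1$ edges among the three rungs of block $i$, and derive from the adjacency constraints a recurrence forcing $a_{i+1} = a_i + 1 \pmod 2$ (or some similar forced alternation of a well-chosen quantity). Summing over $i = 0,\dots,2k$ around the closed structure then yields $0 \equiv 2k+1 \pmod 2$, the desired contradiction. The delicate points will be (i) handling the case distinctions in the "$2{-}1$ vs $1{-}2$" types so that the recurrence is genuinely forced rather than merely consistent, and (ii) making sure no degenerate coloring (e.g. one color class empty on a whole cycle) escapes the argument — such cases should be ruled out quickly since an empty color class would leave the other class equal to all of $\mathrm{XI}_n$, which is $3$-regular hence not locally irregular.
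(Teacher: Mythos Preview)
Your plan for the even case is sound and matches the paper's approach: exhibit a period-$2$ block pattern and check local irregularity on a finite set of vertex types. The paper does exactly this, using two explicit block colorings that can be chained in either order.

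For the odd case, your high-level intuition is correct --- the obstruction is indeed a parity one --- but the proposal has a real gap at the step you yourself flag as ``the main obstacle.'' You hope that some simple $\mathbb{Z}/2$-invariant of a block (your candidate: the parity of the number of colour-$1$ rungs) is \emph{forced} to flip from block $i$ to block $i+1$. You never verify this, and there is no reason to expect that this particular invariant works; the space of valid $2$-LIECs of a single XI-block turns out to have $24$ elements (up to the obvious symmetries), and the compatibility relation between consecutive blocks is irregular enough that a clean closed-form invariant is not apparent. Proving that your recurrence $a_{i+1}\equiv a_i+1$ is genuinely forced, across all the ``$2$--$1$ vs $1$--$2$'' type cases you mention, would amount to a full case analysis of those $24$ block colorings --- which is precisely what the paper does.

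The paper's route is to make this enumeration explicit: encode each valid $2$-LIEC of an XI-block by a quadruple recording the colour and colour-degree at the four boundary vertices, list the $24$ ordered codes, discard $8$ that can never be preceded by a compatible block, and build a digraph $D$ on the remaining $16$ codes with an arc $c_i\to c_j$ when block coloring $c_i$ can be followed by $c_j$. The proof then finishes by observing that every strong component of $D$ is bipartite, so $D$ has no directed odd closed walk, hence $\mathrm{XI}_{2k+1}$ has no $2$-LIEC. Your parity-flip idea is morally the statement that $D$ is bipartite, but you have not supplied the bipartition, and finding one without the enumeration is the whole difficulty.

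One further omission: you do not address why $\lir(\mathrm{XI}_{2k+1})\le 3$. The paper gets this for free from \cref{thm:ring}, since $\mathrm{XI}_n$ is a cycle permutation graph; you should cite that (or supply an explicit $3$-coloring).
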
 
 
\begin{proof}
	The graph $\mathrm{XI}_n$ consists of $n$ subgraphs induced on the vertices 
	$v_{3i}$, $u_{3i}$, $v_{3i+1}$, $u_{3i+1}$, $v_{3i+2}$, and $u_{3i+2}$ 
	with four half-edges incident with the vertices $v_{3i}$, $u_{3i}$, $v_{3i+2}$, and $u_{3i+2}$, for $i \in \{0,\dots,n-1\}$.
	We call each such configuration an XI-subgraph of $\mathrm{XI}_n$.
	
	Next, we consider all possible $2$-LIECs of an XI-subgraph (for clarity, we use colors from $\set{a,b}$).
	To the vertices $v_{3i}$, $u_{3i}$, $v_{3i+2}$, and $u_{3i+2}$ 
	we assign values $c_k$ such that $c \in \{a,b\}$ is the color used on the half-edge incident with the corresponding vertex 
	and $k\in\{1,2,3\}$ is the number of edges of color $c$ incident with this vertex (including the half-edge). 
	This way, we can encode every $2$-LIEC of an XI-subgraph with a quadruple $(p,q,r,s)$ 
	in which the components correspond to the values $c_k$ of the vertices $v_{3i}$, $u_{3i}$, $v_{3i+2}$, and $u_{3i+2}$, respectively, 
	where $p,q,r,s \in \{a_1,a_2,a_3,b_1,b_2,b_3\}$. 
	For example, the XI-subgraph in Figure~\ref{XI} has a $2$-LIEC coloring with the code $(a_3, a_3, a_1, b_3)$.
	
	\begin{figure}[htp!]
		$$
			\includegraphics{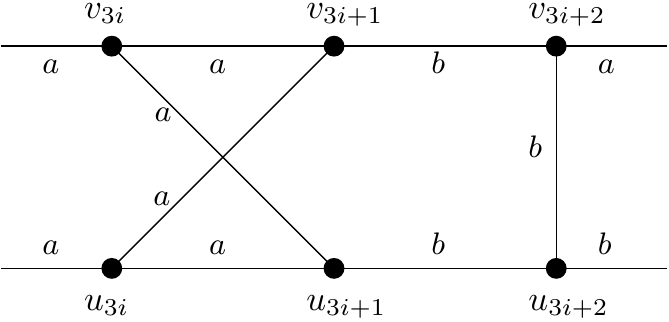}
		$$
		\caption{An XI-subgraph of $\mathrm{XI}_n$ with a $2$-LIEC with the code $(a_3,a_3, a_1, b_3)$.}
		\label{XI}
	\end{figure}

	Let $R_{\mathrm{XI}}$ denote the set of all codes of $2$-LIECs of an XI-subgraph. 
	We use the symmetries of the XI-subgraph to prove the following.
	\begin{claim}
		\label{claim_vymena}
		If $(p,q,r,s) \in R_{\mathrm{XI}}$, then also 
		$\set{(p,q,s,r),(q,p,r,s),(q,p,s,r)} \subset R_{\mathrm{XI}}$. 
	\end{claim}

	\begin{proofclaim} 
		It is easy to see that $(p,q,s,r)\in R_{\mathrm{XI}}$,
		since there exists an isomorphism from the $2$-LIEC $(p,q,r,s)$ to the $2$-LIEC $(p,q,s,r)$ by swapping 
		the vertices $v_{3i+1}$, $u_{3i+1}$ and the vertices $v_{3i+2}$, $u_{3i+2}$. 		
		Similarly, there exists an isomorphism from the $2$-LIEC $(p,q,r,s)$ to the $2$-LIEC $(q,p,r,s)$ by swapping
		the vertices $v_{3i}$, $u_{3i}$. 	
		Finally, by composing the above two isomorphisms, 
		we infer that $(q,p,s,r)\in R_{\mathrm{XI}}$.
	\end{proofclaim}

	We proceed by noting that $(a_3,a_2,r,s) \notin R_{\mathrm{XI}}$ for arbitrary $r,s \in \{a_1,a_2,a_3,b_1,b_2,b_3\}$. 
	On the other hand, $(a_2,a_2,a_3,b_1),(a_2,a_2,a_3,a_2)\in R_{\mathrm{XI}}$, 
	hence for some pairs $\{p,q\}$ there exist more than one suitable pairs $\{r,s\}$. 

	By Claim~\ref{claim_vymena}, it suffices to consider only the codes $(p, q, r, s)$, 
	in which $p \geq q$ and $r \geq s$ 
	(using the ordering $a_3 \geq a_2 \geq a_1 \geq b_1 \geq b_2 \geq b_3$). 
	We call such codes {\em ordered codes}. There are 24 ordered codes in $R_{\mathrm{XI}}$:	
	\begin{alignat*}{6}
		(a_3,a_3,a_1,b_3), &\ (a_2,a_2,a_3,b_1), &\ (a_2,a_2,a_2,a_1), &\ (a_2,a_2,a_2,b_2), &\ (a_2,a_2,b_1,b_2), &\ (a_1,a_1,a_2,b_2), \\
		(a_3,b_2,a_3,b_2), &\ (a_3,b_2,b_2,b_3), &\ (a_3,b_3,a_3,a_2), &\ (a_3,b_3,b_2,b_3), &\ (a_2,b_1,a_2,b_3), &\ (a_2,b_1,b_2,b_3), \\
		(a_2,b_2,a_2,a_1), &\ (a_2,b_2,b_1,b_2), &\ (a_2,b_3,a_3,a_2), &\ (a_2,b_3,a_2,b_3), &\ (a_1,b_2,a_3,a_2), &\ (a_1,b_2,a_3,b_2), \\
		(b_1,b_1,a_2,b_2), &\ (b_2,b_2,a_2,a_1), &\ (b_2,b_2,a_2,b_2), &\ (b_2,b_2,a_1,b_3), &\ (b_2,b_2,b_1,b_2), &\ (b_3,b_3,a_3,b_1).
	\end{alignat*}

	In a $2$-LIEC of $\mathrm{XI}_n$, codes of two consecutive XI-subgraphs (i.e., subgraphs connected by two edges represented by the four half-edges) must be carefully combined.
	Namely, the adjacent half-edges must have the same color, but their endvertices must have distinct color degree.	
	For example, the XI-subgraph colored $(a_3,a_3,a_1,b_3)$ can be connected to 
	the XI-subgraph colored $(a_2,b_2,a_2,a_1)$. 
	In fact, in this case, they can also be connected in the reverse order, implying that we can color any even number of XI-subgraphs, 
	and thus $\lir(\mathrm{XI}_{2k})=2$ for $k \geq 1$. 

	Next, note that if the half-edges incident with $v_{3i}$ and $u_{3i}$ have the same color $c$, 
	then $d^c(v_{3i}) = d^c(u_{3i})$. 
	Moreover, there is no code $(p,q, a_3, a_1)$, $(p,q, b_1, b_3)$, or $(p,q,r,r)$ in $R_{\mathrm{XI}}$ 
	for any $r\in \{a_1,a_2,a_3,b_1,b_2,b_3\}$. 
	Hence, if an XI-subgraph is colored $(a_2,a_2,r,s)$ or $(b_2,b_2,r,s)$, 
	then there is no coloring for the previous XI-subgraph that could be combined with the two codes above.
	Thus, colorings of XI-subgraphs with codes of types $(a_2,a_2,r,s)$ and $(b_2,b_2,r,s)$ are not used
	in any $2$-LIEC of $\mathrm{XI}_n$.

	We thus have $16$ possible ordered codes, which we label as follows:
	\begin{alignat*}{4}
		c_1&=(a_3,a_3,a_1,b_3),    &\quad c_2&=(a_2,b_2,b_1,b_2), &\quad c_3&=(a_2,b_2,a_2,a_1), &\quad c_4&=(b_3,b_3,a_3,b_1), \\
		c_5&=(a_3,b_2,a_3,b_2),    &\quad c_6&=(a_2,b_1,a_2,b_3), &\quad c_7&=(a_1,b_2,a_3,b_2), &\quad c_8&=(a_2,b_3,a_2,b_3), \\
		c_9&=(a_2,b_1,b_2,b_3), 	  &\quad c_{10}&=(a_3,b_2,b_2,b_3), &\quad c_{11}&=(a_2,b_3,a_3,a_2), &\quad c_{12}&=(a_1,b_2,a_3,a_2), \\
		c_{13}&=(b_1,b_1,a_2,b_2), &\quad c_{14}&=(a_3,b_3,a_3,a_2), &\quad c_{15}&=(a_3,b_3,b_2,b_3), &\quad c_{16}&= (a_1,a_1,a_2,b_2). 
	\end{alignat*}
	We construct a digraph $D$ (depicted in Figure~\ref{fig:digraph}) with the vertex set $c_1,\dots,c_{16}$ and an arc 
	between vertices $c_i$ and $c_j$ if the code $c_i$ can be connected to the code $c_j$.
	\begin{figure}[htp!]
		$$
			\includegraphics{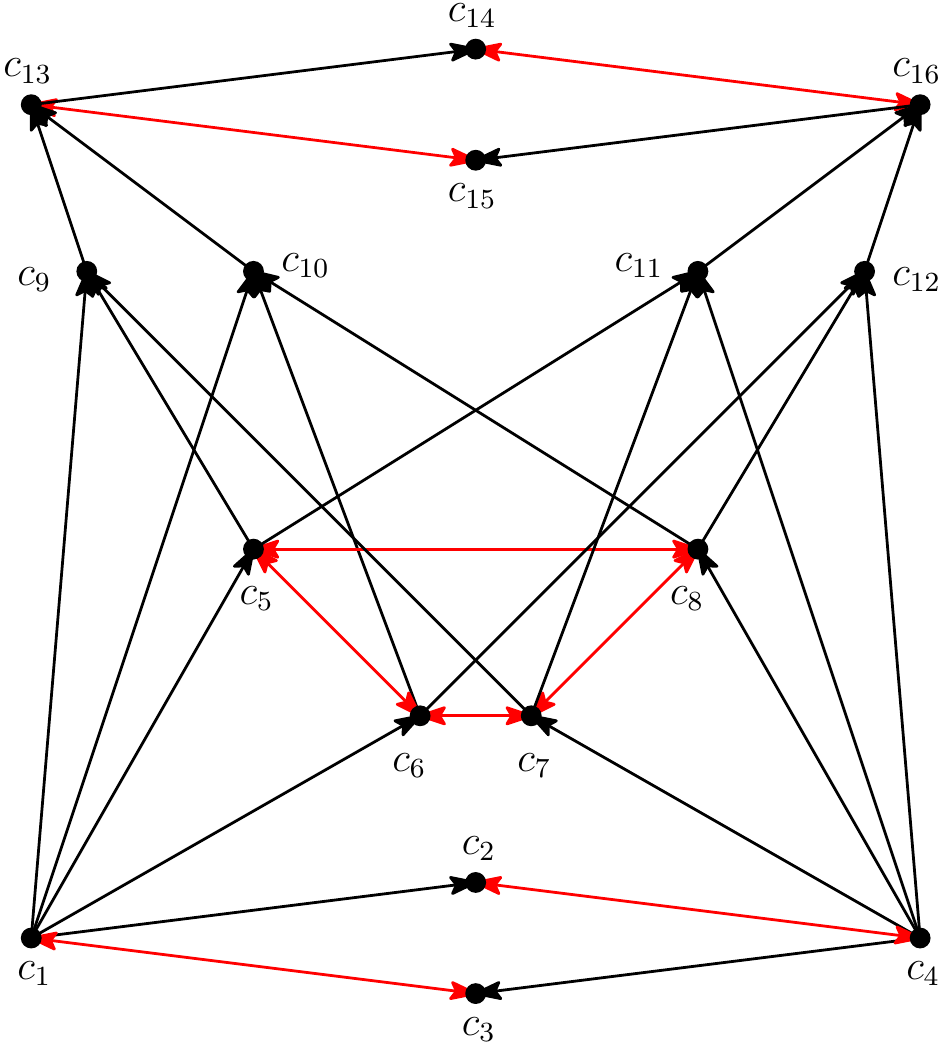}
		$$
		\caption{The digraph $D$. In red, we depict bidirectional arcs.}
		\label{fig:digraph}
	\end{figure}

	Note that $D$ consists of seven strong components (for strong connectivity of digraphs we refer to~\cite{Bang-JenGut09}, Section 1.5)
	with the vertex sets 
	$C_1= \{ c_1, c_2, c_3, c_4\}$, 
	$C_2= \{ c_5, c_6, c_7, c_8\}$, 
	$C_3= \{c_9\}$, 
	$C_4= \{c_{10}\}$, 
	$C_5= \{c_{11}\}$, 
	$C_6= \{c_{12}\}$, and 
	$C_7= \{c_{13}, c_{14}, c_{15}, c_{16}\}$. 
	Moreover, all strong components are bipartite, and therefore $D$ does not contain any oriented odd cycle. 
	This in turn means that there is no $2$-LIEC of $\mathrm{XI}_{2k+1}$ for any $k\geq 1$. 
	Note that, by \cref{thm:ring}, $\lir(\mathrm{XI}_{2k+1})=3$, since $\mathrm{XI}_n$ is a cycle permutation graph. 
\end{proof}

Theorem~\ref{thm:theoremXI} shows that there exists an infinite family of cubic graphs with girth 4 which do not admit a $2$-LIEC. 
So, one may ask whether having girth at least $5$ suffices for a cubic graph to admit a $2$-LIEC.
Using computer, we tested all cubic graphs with girth at least $4$ on at most 24 vertices,
and determined the number of graphs which do not admit a $2$-LIEC (see Table~\ref{tablenon2-liec}). 

\begin{table}[htp!] 
	\centering
	\begin{tabular}{|c|c|c|c|c|c|c|c|c|c|c|}
		\hline
		$g(G)\backslash n$ 	& 6	& 8	& 10	& 12 	& 14 	& 16 	& 18 	& 20 	& 22 	& 24 	\\
		\hline
		$\ge 4$		& 0 & 0 & 1 	& 2 	& 2 	& 0 	& 1 	& 0 	& 4 	& 0 	\\
		\hline
		$\ge 5$ 	& - & - & 0 	& 1 	& 2 	& 0 	& 0 	& 0 	& 2 	& 0 	\\
		\hline
	\end{tabular}
	\caption{The number of cubic graphs $G$ on small number of vertices and girth at least $4$/at least $5$ with $\lir(G) = 3$.}
	\label{tablenon2-liec}
\end{table}

We found five graphs with girth $5$, which do not admit a $2$-LIEC; 
two of them are the generalized Petersen graphs $GP(7,2)$ and $GP(11,2)$, depicted in Figure~\ref{fig:g5GP},
and the other three are depicted in Figure~\ref{fig:g5NGP}. 
Note that these five graphs are the only known not $2$-LIEC colorable cubic graphs of girth at least $5$.
\begin{figure}[htp!]
	$$
		\includegraphics{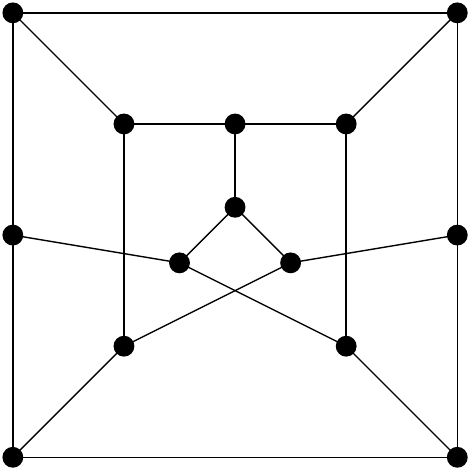} \quad\quad
		\includegraphics{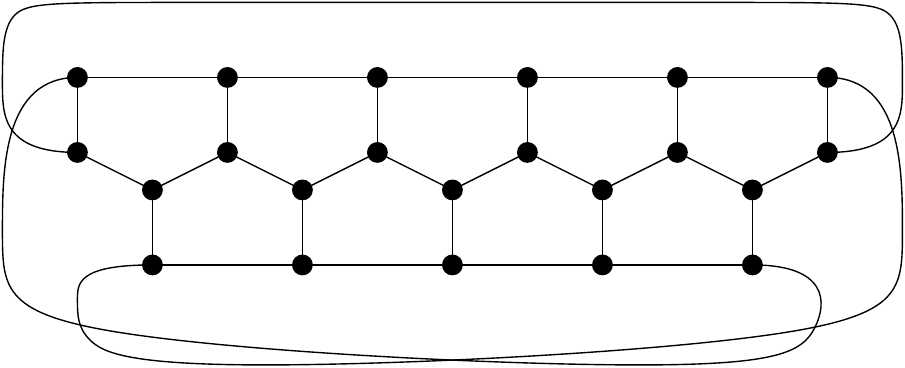}
	$$
	\caption{The generalized Petersen graphs $GP(7,2)$ (left) and $GP(11,2)$ (right) do not admit a $2$-LIEC.}
	\label{fig:g5GP}
\end{figure}

\begin{figure}[htp!]
	$$
		\includegraphics{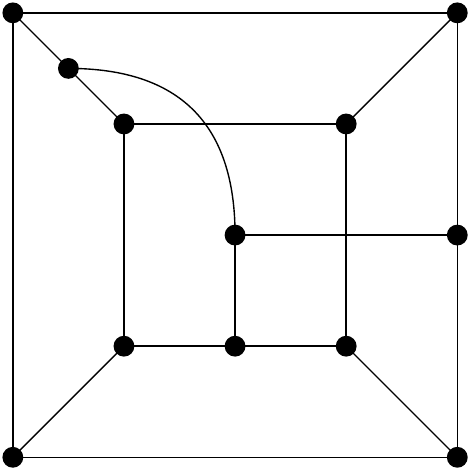} \hspace{2cm}
		\includegraphics{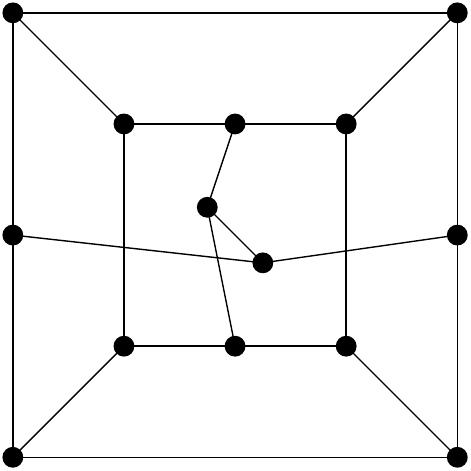}
	$$
	$$
		\includegraphics{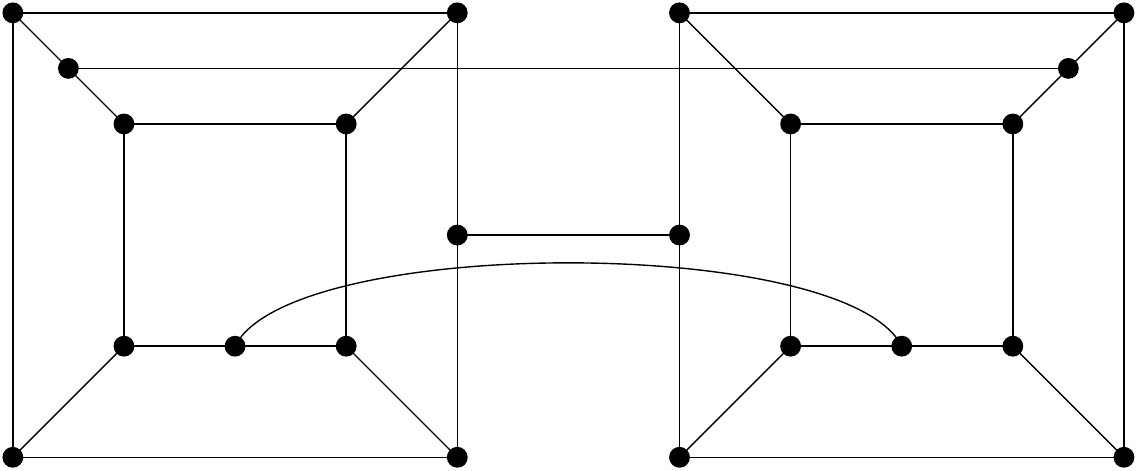}
	$$
	\caption{Three cubic graphs of girth $5$ on $12$, $14$, and $22$ vertices, which are not generalized Petersen graphs and do not admit a $2$-LIEC.}
	\label{fig:g5NGP}
\end{figure}
We computationally verified that these two are the only generalized Petersen graphs on at most $46$ vertices
that do not admit a $2$-LIEC and this encouraged us to propose the following.
\begin{conjecture}
	\label{conj:GenPet}
	For every generalized Petersen graph $G$ with girth at least $5$, with the exception of $GP(7,2)$ and $GP(11,2)$,
	we have that $\lir(G) = 2$.
\end{conjecture}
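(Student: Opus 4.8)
\textbf{Proof strategy for Conjecture~\ref{conj:GenPet} (a plan).}
The plan is to attack this conjecture by exploiting the highly regular structure of $P(n,k)$ together with the machinery developed in Section~\ref{sec:Petersen}. Recall that $P(n,k)$ is a ring permutation graph $P_\phi(C_n, R)$, where the outer cycle $C_n$ plays the role of the "$C_n$" and the inner part $R$ is the disjoint union of $\gcd(n,k)$ cycles, each of length $n/\gcd(n,k)$. So the proof of Theorem~\ref{thm:ring} already gives $\lir(P(n,k)) \le 3$, and the whole task is to sharpen this to a $2$-LIEC whenever $g(P(n,k)) \ge 5$ and $(n,k) \notin \{(7,2),(11,2)\}$. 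First I would recall the classification of when $P(n,k)$ has girth $\ge 5$ (girth is $3$ only for $k=1$, $n=3$, or $n=3k$, and girth is $4$ in a small explicit list of cases such as $n$ even with $k=1$, or $n=2k\pm2$, etc.), so that the remaining family splits into a few residue/parameter classes that can be handled uniformly.

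The core of the argument should be an explicit periodic $2$-edge-coloring of $P(n,k)$ built on a block of consecutive indices. Here is the shape of it: the graph $P(n,k)$ is "almost" a covering of a small fixed gadget — indeed, shifting all indices by $1$ is an automorphism — so a coloring that is invariant under a shift by some divisor or near-divisor of $n$ would only need to be checked on one block. Concretely, I would try to find an integer $p$ (depending on the residue of $n$ modulo a small number, and on $k$) and a coloring pattern on the edges $\{u_iu_{i+1}, u_iv_i, v_iv_{i+k}\}$ for $i$ in one period, such that: (i) no two adjacent vertices get the same color-degree inside a period, and (ii) the pattern "wraps around" consistently given the value of $n \bmod p$. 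Because the inner vertices $v_i$ have their two cycle-neighbors at distance $k$, the local coloring constraint at $v_i$ couples indices $i-k, i, i+k$, which means the natural period to work with is a multiple of $k$ (or of $\gcd(n,k)$); the outer constraint at $u_i$ couples $i-1,i,i+1$. Balancing these two incompatible "clocks" is exactly where the case analysis lives, and I expect to need roughly $O(k)$-periodic patterns with $O(1)$ exceptional classes of $n \bmod (\text{something})$, together with a handful of sporadic small cases — including verifying that $(7,2)$ and $(11,2)$ genuinely fall into the exceptional classes where the pattern cannot close up, which matches the computational evidence.

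An alternative, possibly cleaner, route is the one already used in the proof of Theorem~\ref{thm:ring}: peel off a locally irregular spanning subgraph $S'$ with $\lir(S')=1$ so that $T' = G \setminus E(S')$ is a tree, and then apply Lemmas~\ref{lem:tree1}--\ref{lem:tree3}. For ring permutation graphs the proof of Theorem~\ref{thm:ring} shows $T'$ is always a tree, and the $3$-LIEC fails to drop to $2$ \emph{only} when every cycle of $R$ has length $3$, which for $P(n,k)$ means $n=3k$ — but $P(3k,k)$ has girth $3$, hence is excluded once girth $\ge 5$ is assumed. So in fact, for $P(n,k)$ with girth $\ge 5$, the inner ring $R$ always has a cycle of length $\ge 4$ (its cycles have common length $n/\gcd(n,k) \ge 5$ unless $k \mid n$, and if $k \mid n$ with $k \ge 2$ the cycle length $n/k \ge 4$ unless $n=3k$), so $T'$ automatically contains a degree-$1$--degree-$3$ edge and Lemma~\ref{lem:tree1} already gives a $2$-LIEC of $T'$, hence a $3$-LIEC of $G$ — but \emph{not} a $2$-LIEC of $G$, since $S'$ uses a separate first color. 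The subtlety, and the main obstacle, is therefore to merge the single color of $S'$ with the two colors of $T'$: one must choose $S'$ and the $2$-LIEC of $T'$ so that their union is still locally irregular in \emph{each} color class, which is a genuine extra constraint and is presumably where $GP(7,2)$ and $GP(11,2)$ become obstructions. I expect the honest proof to combine both ideas: use the $S'/T'$ decomposition to reduce to a tree, but then re-examine the few color-degree conflicts at the boundary between $S'$ and $T'$ and resolve them by local recoloring (in the spirit of the swaps in Lemma~\ref{lem:tree1}), with the residue classes of $n \bmod k$ and $n \bmod 2$ controlling whether such a resolution exists. Closing the parity/residue bookkeeping — and pinning down precisely which classes are unavoidable exceptions — is the hard part; the rest is an explicit but routine construction plus a finite computer check for small $n$.
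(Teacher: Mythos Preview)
The statement you are addressing is a \emph{conjecture} in the paper, not a theorem: the authors explicitly propose it on the basis of computational evidence (verification for $n \le 46$) and do not give a proof. There is therefore no ``paper's own proof'' to compare your proposal against.

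Your proposal is, as you yourself label it, a plan rather than a proof, and it does not close the gap. The two routes you sketch are reasonable starting points, but each has a genuine obstacle you have not resolved. For the periodic-pattern approach, you correctly observe that the outer constraint couples $i-1,i,i+1$ while the inner constraint couples $i-k,i,i+k$; however, you do not exhibit any actual pattern, and the difficulty is precisely that $k$ is an unbounded parameter, so ``$O(k)$-periodic patterns with $O(1)$ exceptional classes'' is an aspiration, not an argument. For the $S'/T'$ route, you correctly diagnose that the decomposition of Theorem~\ref{thm:ring} yields three colors, not two, and that merging the color of $S'$ into one of the two colors of $T'$ is a genuine extra constraint. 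But ``resolve them by local recoloring in the spirit of Lemma~\ref{lem:tree1}'' is not a method: the conflicts at the $S'/T'$ boundary are not local, since recoloring an edge of $S'$ can propagate degree constraints along the entire inner cycle. Nothing in your plan explains why the two sporadic exceptions $GP(7,2)$ and $GP(11,2)$ arise and no others do, which is exactly the content of the conjecture.

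In short: the paper offers no proof, and your proposal is a heuristic outline that identifies where the difficulty lies but does not overcome it.
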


Based on computational evidence, we also believe the following.
\begin{conjecture}
	There exist $g_0$ and $n_0$ such that every connected cubic graph $G$ with girth $g(G) \geq g_0$ 
	and $|V(G)| \geq n_0$ admits a $2$-LIEC.
\end{conjecture}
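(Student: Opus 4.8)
\medskip
\noindent\textbf{Proof proposal.}
The plan is to combine a structural reduction with a local, tree-like analysis, exploiting that cubic graphs of large girth are locally acyclic. The natural first step is a reformulation: for a cubic graph $G$, a $2$-edge-coloring with colors $1,2$ is a $2$-LIEC if and only if $d^1(u) \neq d^1(v)$ for every edge $uv$. Indeed, if an edge $uv$ has color $1$, then local irregularity of color class $1$ forces $d^1(u) \neq d^1(v)$; and if $uv$ has color $2$, then local irregularity of color class $2$ forces $d^2(u) \neq d^2(v)$, which is the same condition because $d^2(w) = 3 - d^1(w)$ at every vertex $w$. Hence a $2$-LIEC of $G$ is precisely a spanning subgraph $H$ (the color-$1$ edges) whose degree function $f$, given by $f(v) = d^1(v)$, is a proper coloring of $G$ with values in $\set{0,1,2,3}$ (adjacent vertices receiving distinct values). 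The problem therefore reduces to producing such an $f$ that is \emph{realizable}, i.e., that genuinely is the degree sequence of some subgraph of $G$.

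I would first dispose of graphs that are not $2$-edge-connected: a connected cubic graph decomposes along its bridges into tree-like arranged pieces, and since a bridge lies on no cycle one always has a free color at it, so colorings of the pieces can be glued edge by edge, just as bridges are handled via Claim~\ref{cl:bridge} in the proof of Theorem~\ref{thm:claw}. One may thus assume $G$ is $2$-edge-connected, so that by Petersen's theorem $G$ has a perfect matching $M$ for which $G - M$ is a disjoint union of cycles $Z_1,\dots,Z_r$, all of length at least $g(G)$. If every $Z_j$ is even, I expect one can adapt the known fact that cubic bipartite graphs admit a $2$-LIEC; the core case is when some $Z_j$ is odd. There the idea is to choose $f$ with values in $\set{1,2}$ on all but a sparse set of vertices, using the values $0$ and $3$ only to ``break'' odd cycles --- in direct analogy with the fact that $\mathrm{XI}_{2k+1}$ fails precisely because its odd length forces an odd oriented cycle in the auxiliary digraph, whereas large girth leaves enough room to absorb such corrections. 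Realizability of the resulting $f$ would be established via Tutte's $f$-factor theorem: large girth destroys the tight local configurations that could violate the $f$-factor inequalities, and the sole global constraint, that $\sum_v f(v)$ be even, can be repaired by flipping $f$ at a single suitably chosen vertex (say from $1$ to $3$) while keeping $f$ proper, which large girth again makes possible.

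I expect the main obstacle to be the \emph{global consistency} of the odd-cycle corrections --- exactly what breaks the small girth-$5$ examples $GP(7,2)$ and $GP(11,2)$ --- since all odd cycles and all the ways they interlock must be resolved simultaneously. A plain Lov\'{a}sz Local Lemma attack on the uniformly random $2$-coloring does not suffice: for a fixed edge $uv$, the bad event $d^1(u) = d^1(v)$ has probability $\tfrac{3}{8}$, whereas the dependency neighbourhood of an edge, although of bounded size, is too large for the condition $e\,p\,(d+1) < 1$. I therefore anticipate needing one of two routes: an entropy-compression or constructive local lemma refinement that re-randomizes only small balls and uses the tree structure inside them, or a purely structural argument that first fixes the corrections on the (few, long) odd cycles and then invokes a degree-constrained-subgraph existence theorem on the remaining almost-bipartite instance. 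Pinning down how large $g_0$, and correspondingly $n_0$, must be for either route to close the argument is the crux.
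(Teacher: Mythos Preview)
This statement is presented in the paper as an open \emph{conjecture}, supported only by the computational evidence in Table~\ref{tablenon2-liec} and the remark that $g_0 \ge 5$. The paper offers no proof, so there is nothing to compare your attempt against.

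Your proposal is not a proof either, and you concede as much in the final paragraph. The reformulation --- that a $2$-LIEC of a cubic graph is exactly a spanning subgraph whose degree function is a proper vertex-coloring of $G$ with values in $\set{0,1,2,3}$ --- is correct and a good starting point. The rest, however, is a research outline with real gaps. The bridge reduction does not work as you state it: removing a bridge from a cubic graph leaves two components that are no longer cubic (each has a vertex of degree $2$), so an inductive hypothesis about cubic graphs does not apply to them, and the analogy with Claim~\ref{cl:bridge} fails because that claim concerns a $3$-LIEC and relies on the spare third color to avoid conflicts at the bridge. The $f$-factor route requires you to exhibit an $f$ that is simultaneously a proper $4$-coloring of $G$ \emph{and} satisfies Tutte's conditions; the assertion that large girth ``destroys the tight local configurations'' is a hope, not an argument, and you give no mechanism for producing such an $f$. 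Finally, you correctly note that the plain Local Lemma fails, and then list two speculative alternatives without carrying either out. You have identified a reasonable framework and the right obstacles, but the conjecture remains open --- in the paper and in your write-up.
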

As mentioned above, $g_0 \ge 5$ and if $g_0 = 5$ then $n_0 > 22$. We also note that the conjecture might be true 
for large enough odd girth, i.e., for graphs with short even cycles allowed.

\section{Conclusion}

In this paper we proved that decomposable subcubic graphs from some particular classes admit a $3$-LIEC
and many of them even a $2$-LIEC.
While there are still many decomposable subcubic graphs for which the bound of $3$ colors remains open,
we do believe that $3$ is the correct bound and, in light of Conjecture~\ref{conj:main2},
we propose a weaker version.
\begin{conjecture}
	\label{conj:subcub}
	For every decomposable graph $G$ with maximum degree $3$ it holds that $\lir(G) \le 3$.
\end{conjecture}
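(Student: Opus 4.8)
The plan is to follow the minimal-counterexample strategy that succeeded for claw-free graphs in Theorem~\ref{thm:claw}, but to strip the claw-free hypothesis from every reduction. I would take a connected decomposable subcubic graph $G$ with $\lir(G)>3$ and the fewest edges, and then derive a list of forbidden configurations by the same template: delete or contract a small subgraph $H$, use the recursive description of $\mathcal{T}$ to argue that the smaller graph is still decomposable (hence $3$-LIEC colorable by minimality), and finally extend the coloring across $H$. Only the reductions that never used claw-freeness — the absence of pendant $2$-paths (Claim~\ref{cl:pendant2path}) and of $3$-cycles with two $2$-vertices (Claim~\ref{cl:2thread3cyc}) — transfer unchanged; the $4$-cycle, thread, bridge, and triangle reductions (Claims~\ref{cl:2thread4cyc}, \ref{cl:2thread}, \ref{cl:bridge}, and~\ref{cl:diamond}--\ref{cl:triandist}) all invoked claw-freeness and must be redone. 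After the surviving reductions I may already assume $\delta(G)\ge 2$.

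The substantive work is to re-establish the thread and triangle reductions in the presence of claws. In the claw-free proof, claw-freeness was invoked precisely to force the two off-thread neighbors $w_1,w_2$ of a $3$-vertex to be adjacent, which collapsed each long case analysis to a bounded one. Dropping it, I would still contract a $2^+$-thread as in Claim~\ref{cl:2thread}, but now confront a genuinely open neighborhood in which the three neighbors of the merged $3$-vertex may be mutually non-adjacent. Here I would enlarge the case analysis on the colors and color-degrees seen across the cut, exploiting that in a subcubic graph only color-degrees in $\set{0,1,2,3}$ occur and that $K_2$ is the unique forbidden monochromatic component, so that a free color is available after at most local recolorings. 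Crucially, without claw-freeness a $3$-vertex need not lie on any triangle, so the clean conclusion of Claim~\ref{cl:triandist} — that adjacent $3$-vertices always share a triangle — is no longer forced and cannot be used.

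This last point dictates the endgame, which is where I expect the real difficulty. The claw-free proof finished by contracting all triangles to obtain a bipartite graph that is $3$-regular on one side and $2$-regular on the other, and then $3$-edge-coloring it via K\"onig's Theorem. Without claw-freeness there is no such canonical contraction, and after the reductions I expect to be left with a $2$-edge-connected subcubic graph in which $2$-vertices are isolated in a suitable sense and triangles, if any, attach only to $3$-vertices. For this irreducible core I would attempt a global argument modeled on the proof of Theorem~\ref{thm:ring}: peel off one locally irregular subgraph (a union of stars centered on a carefully chosen set of vertices, or a single locally irregular spanning-type subgraph as constructed there) so that the remainder decomposes into paths and even-length pieces admitting a $2$-LIEC, as guaranteed by Lemmas~\ref{lem:tree1}--\ref{lem:tree3}. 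Orientations and the existence of perfect matchings in bridgeless cubic graphs (Petersen's Theorem) are the natural tools to try when selecting that first subgraph.

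The hard part will be exactly this global step on the core. Unlike the claw-free case there is no bipartite contraction to feed into K\"onig's Theorem, and in a triangle-free cubic graph the available locally irregular subgraphs are essentially stars and double-stars, which are rigid; guaranteeing that one color class can be removed so that the rest admits a $2$-LIEC is precisely the point at which the bound $\lir\le 4$ of Theorem~\ref{thm:subcub} has resisted improvement. A secondary obstacle is bookkeeping: without claw-freeness the thread and near-triangle reductions branch into many subcases, and proving that \emph{every} branch extends — rather than becoming stuck on a single sporadic irreducible graph, as happened with $H_0$ in the cactus case — will likely require a unified extension lemma, or, failing that, a computer-assisted check of the finitely many irreducible cores that remain after the reductions.
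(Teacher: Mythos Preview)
The statement you are attempting to prove is Conjecture~\ref{conj:subcub}, not a theorem: the paper offers no proof of it, and it is explicitly stated as an open problem in the Conclusion. There is therefore nothing in the paper to compare your attempt against.

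As for the content of your proposal, it is an honest research outline rather than a proof. You correctly identify that only Claims~\ref{cl:pendant2path} and~\ref{cl:2thread3cyc} survive verbatim without the claw-free hypothesis, and you are right that the endgame --- replacing the ``contract all triangles and apply K\"onig'' step --- is where the genuine obstruction lies. But you do not actually carry out either of the two substantive pieces you describe: the enlarged case analysis for threads in the presence of claws is asserted to be feasible, not executed, and the global step on the irreducible core is left at the level of ``attempt a global argument modeled on Theorem~\ref{thm:ring}'' together with a list of tools (Petersen's Theorem, orientations, star peelings) that might be relevant. You yourself flag that this is ``precisely the point at which the bound $\lir\le 4$ of Theorem~\ref{thm:subcub} has resisted improvement,'' which is an accurate assessment of why the conjecture is still open. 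A proof would need to resolve that step, and nothing in your outline does so.
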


Regarding locally irregular edge-coloring, 
many questions remain open;
we conclude by recalling two of them regarding bipartite graphs.
\begin{question}[Baudon, Bensmail, and Sopena~\cite{BauBenSop15}, Question~5.1]
	Is the problem of deciding whether a bipartite graph admits a $2$-LIEC NP-complete?
\end{question}

\begin{question}[Bensmail, Merker, and Thomassen~\cite{BenMerTho16}, Question~5.1]
	Does there exist a bipartite graph $G$ with minimum degree $3$ and $\lir(G) > 2$?
\end{question}
Recall that there are bipartite graphs of minimum degree $2$ and arbitrarily large girth which do not admit a $2$-LIEC (see Figure~\ref{fig:4t+1}).

\paragraph{Acknowledgment.} 
M. Macekov\'{a}, S. Rindo\v{s}ov\'{a}, R. Sot\'{a}k, and K. Srokov\'{a} acknowledge the financial support from the projects APVV--19--0153 and VEGA 1/0574/21.
B.~Lu\v{z}ar and K. \v{S}torgel were partially supported by the Slovenian Research Agency Program P1--0383 and the projects J1--3002 and J1--4008.
K. \v{S}torgel also acknowledges support from the Young Researchers program.

\bibliographystyle{plain}
{
	\bibliography{References}
}

\end{document}